\newtheorem{theorem}{Theorem}[section]
\newtheorem{lemma}[theorem]{Lemma}
\def\ifl{\iffalse }
\def\bc{\begin{center}}       \def\ec{\end{center}}
\def\ba{\begin{array}}        \def\ea{\end{array}}
\def\be{\begin{equation}}     \def\ee{\end{equation}}
\def\bea{\begin{eqnarray}}    \def\eea{\end{eqnarray}}
\def\beaa{\begin{eqnarray*}}  \def\eeaa{\end{eqnarray*}}
\numberwithin{equation}{section}
\newtheorem{remark}[theorem]{Remark}
\numberwithin{equation}{section}
\begin{document}

\title[minimal Chemotaxis system, sub-logistic source, and boundedness]
{Sub-logistic source can prevent blow-up in the 2D  minimal  Keller-Segel chemotaxis system}

\author{Tian Xiang}
\address{Institute for Mathematical Sciences, Renmin University of China, Beijing,  100872, China}
\email{txiang@ruc.edu.cn}
\thanks{This project was partially supported by the NSF of China (No. 11601516).}

\subjclass[2000]{Primary:  35K59, 	35K51,  35K57, 92C17; Secondary: 35B44, 35A01.}


\keywords{Minimal chemotaxis systems, sub-logistic damping,  boundedness,  global existence, blow-up.}

\begin{abstract}

It is well-known that the Neumann initial-boundary value problem for the minimal-chemotaxis-logistic system
$$\left\{ \begin{array}{lll}
&u_t = \Delta u-\chi\nabla \cdot ( u\nabla v)+a u-b u^2,  &\quad x\in \Omega, t>0, \\[0.2cm]
& \tau v_t =  \Delta v  -  v+  u,  &\quad x\in \Omega, t>0 \end{array}\right.
$$
 in a bounded smooth   domain  $\Omega\subset \mathbb{R}^2$ doesn't have any blow-ups for any $a\in\mathbb{R}, \tau\geq 0, \chi>0$ and $b>0$. Here, we obtain the same conclusion by replacing the logistic source $a u-b u^2$ with a kinetic term $f(u)$ fulfilling $f(0)\geq 0$,
 $$
 \liminf_{s\to \infty}\Bigr\{-f(s)\cdot \frac{\ln s}{s^2}\Bigr\}=\mu\in (0, \infty]
$$
as well as
$$
(\chi-\mu)^+M<\frac{1}{2C_{GN}^4},
$$
where $c^+=\max\{c,0\}$, $C_{GN}$ is the Gagliardo-Nirenberg  constant and
$$
  M=\|u_0\|_{L^1(\Omega)}+|\Omega|\inf_{\eta>0}\frac{\sup\{ f(s)+\eta s:\  s>0\}}{\eta}.
$$
In this setup, it is shown that this problem doesn't have any blow-up by ensuring all solutions  are global-in-time and uniformly bounded. Clearly, $f$ covers super-, logistic, sub-logistic sources like $f(s)=as-bs^\theta$ with $b>0$ and $\theta\geq2$, $f(s)=as-\frac{bs^2}{\ln^\gamma(s+1)}$ with $b>0$ and $\gamma\in (0,1)$, and $f(s)=as-\frac{bs^2}{\ln(\ln(s+e))}$ with $b>0$ etc. This indicates that  logistic damping is not  the weakest damping  to guarantee boundedness for the 2D Keller-Segel minimal chemotaxis  model.
\end{abstract}

\maketitle

\section{Introduction}

  Chemotaxis is the directed  movement of mobile species in response to chemical signal in their environment. To model this important process,  in 1970s,  Keller-Segel \cite{Ke, Ke1} proposed a classical  coupled parabolic partial differential system to describe the mobile cells (with density $u$) move towards the concentration gradient of a chemical substance $v$ produced by the cells
themselves.  Nowadays, this system is widely  known as the minimal Keller-Segel chemotaxis model:
  \be \label{KSM}\begin{cases}
u_t = \Delta u-\chi\nabla \cdot ( u\nabla v),  &x\in \Omega, t>0, \\[0.2cm]
\tau v_t =  \Delta v- v+ u,  & x\in \Omega, t>0, \\[0.2cm]
\frac{\partial u}{\partial \nu}=\frac{\partial v}{\partial \nu}=0, & x\in \partial \Omega, t>0,\\[0.2cm]
u(x,0)=u_0(x)\geq 0,  \tau v(x,0)= \tau v_0(x)\geq 0, & x\in \Omega,    \end{cases}  \ee
where the habitat $\Omega\subset \mathbb{R}^n(n\geq 1)$ is a bounded domain with the smooth boundary $\partial \Omega$, $\tau\geq 0, \chi>0$ and  $\frac{\partial}{\partial\nu}$ stands for the outward  normal derivative on $\partial\Omega$.

Since this pioneering work, the minimal model \eqref{KSM} and it various variants have been  received significant attention  to understand chemotaxis mechanism in various contexts and, the chemotactic induced cross-diffusion  has been shown to lead to finite/infinite-time blow-up under certain circumstances. The grand phenomenological picture is surveyed as follows:  No finite/infinite  time blow-up occurs in $1$-D  \cite{HP04, OY01, Xiang14},   critical  mass blow-ups occur in $2$-D: when the initial mass  lies below the threshold solutions exist globally and converge to a single equilibrium in large time, while above the threshold solutions may blow up in finite time, cf. \cite{HW01,FLP07, NSY97, Na01, SS01}, and even small initial mass will result in  blow-ups in $\geq 3$-D \cite{Win10-JDE, Win13}.  Accordingly, a large variety of work has been dedicated to determining  the situations where either  blow-ups occur or  global-in-time bounded solutions exist. See the review articles  \cite{Ho1,Hi, Win13, BBTW15} for more  progresses on  \eqref{KSM}  and its variants.

To see that the chemotactic induced cross-diffusion  $-\chi\nabla\cdot(u\nabla v)$ in model  \eqref{KSM} has an aggregation effect, in the parabolic-elliptic case, i.e., $\tau=0$, let  us expand out the $u$-equation and then substitute the expression of $\Delta v=v-u$ obtained from the $v$-equation, we see that the $u$-equation becomes
\be\label{u-sub-pe}
u_t = \Delta  u-\chi \nabla u \nabla v+\chi u^2-\chi uv.
\ee
This  tells us that the chemotaxis term $-\chi\nabla\cdot(u\nabla v)$ does play a  chemotactic aggregation role and it behaves roughly like the quadratic growth $\chi u^2$,   and, rigorous if $v$ is $L^\infty$-bounded. Therefore, adding a logistic source of the form $a u-b u^2$ with $a \in\mathbb{R}$ and $b>0$ to the $u$-equation in \eqref{KSM} is very much expected to eliminate such finite/inifinte time blow-up phenomenon. The presence of logistic source indeed has been demonstrated to have an effect  of preventing  blow-ups for \eqref{KSM}. More specifically,  for the minimal-chemotaxis-logistic model (with the same boundary and initial data as \eqref{KSM}  suspended for less writing and saving space)
\begin{equation}\label{minimal-model}\left\{ \begin{array}{lll}
&u_t = \Delta u-\chi\nabla \cdot ( u\nabla v)+a u-b u^2,  &\quad x\in \Omega, t>0, \\[0.2cm]
&\tau v_t = \Delta  v  - v+u,  &\quad x\in \Omega, t>0,  \end{array}\right.  \end{equation}
in the case of $\tau=0$, if $b>\chi$, then a replacement of  the $u$-equation in \eqref{minimal-model} with Eq. \eqref{u-sub-pe}  roughly  shows that every solution to \eqref{minimal-model} with $\tau=0$ is global-in-time and is uniformly bounded. In fact, subtle and rigorous studies, cf. \cite{OY01, OTYM02, HP04, Xiangpre}, have shown, for $n=1,2$, no matter $\tau=0$ or $\tau>0$,   {\it even arbitrarily  small $b>0$ } will be enough to rule out any blow-up by ensuring all solutions to  \eqref{minimal-model} are global-in-time and uniformly bounded for all reasonably initial data. This  is even  true in respective of  global existence of classical solutions  for \eqref{minimal-model} with $\tau=0$ and  the chemo-sensitivity  $-\chi \nabla\cdot(u\nabla v)$ replaced by a singular chemo-sensitivity  $-\chi \nabla\cdot(\frac{u}{v}\nabla v)$ and uniform boundedness is further ensured in \cite{FW14} for sufficiently large $a>0$. The same result has been recently extended to the fully parabolic case \cite{ZZ17-ZAMP}. These results convey to us that there is no difference between parabolic-elliptic case ($\tau=0$) and parabolic-parabolic case ($\tau>0$) in $2$-D in respective of global existence and boundedness for \eqref{minimal-model}. In $2$-D setting, comparing the results for \eqref{minimal-model} with that of \eqref{KSM}, we find that blow-up is fully precluded  as long as a logistic source presents, and, in this case, there is no critical mass blow-up phenomenon.   Based on these observations, we wonder
\begin{itemize}
\item[(Q)] adding a logistic source may be more than enough to prevent blow-up for \eqref{KSM} in $2$-D, and thus we   wonder  whether or not adding a sub-logistic source is already sufficient to prevent blow-up for \eqref{KSM} in $2$-D?
\end{itemize}
In this paper,  we obtain a positive answer  to  (Q) by showing  that a sub-logistic source is already enough to prevent blow-up for \eqref{KSM} in $2$-D. To state our precise results, we replace the logistic source in \eqref{minimal-model} with a  growth source $f$  or equivalently we add $f$ to the $u$-equation in \eqref{KSM} and then consider the resulting IBVP:
\be\label{sub-log-chemo-pe}\left\{ \begin{array}{lll}
&u_t = \Delta u-\chi\nabla \cdot (u \nabla v)+f(u),  &\quad x\in \Omega, t>0, \\[0.2cm]
&\tau v_t = \Delta  v  -  v+ u,  &\quad x\in \Omega, t>0, \\[0.2cm]
&\frac{\partial u}{\partial \nu}=\frac{\partial v}{\partial \nu}=0, &\quad x\in \partial \Omega, t>0,\\[0.2cm]
&u(x,0)=u_0(x)\geq,\not\equiv 0, \tau v(x,0)=\tau v_0(x)\geq 0, &\quad x\in \Omega.  \end{array}\right.  \ee
 With this setup,  our precise  findings on blow-up prevention by sub-logistic sources for \eqref{KSM} in $2$-D read as follows:
\begin{theorem} \label{main-bdd-thm} [Blow-up prevention by sub-logistic sources  for \eqref{sub-log-chemo-pe} in 2-D] Let  $\Omega\subset\mathbb{R}^2$ be a bounded smooth domain, $\tau\geq 0$, $\chi>0$,  the initial data  $(u_0,v_0)$ satisfy  $u_0\in C(\overline{\Omega})$  and $v_0\in W^{1,r}(\Omega)$ for some $r>2$ in the case of $\tau>0$, and, finally,  let the kinetic  function $f$ belonging to $W^{1,\infty}_{\text{loc}}(\Omega)$ satisfy  $f(0)\geq 0$ as well as
\be\label{log-sub-con}
\liminf_{s\to \infty}\Bigr\{-f(s)\cdot \frac{\ln s}{s^2}\Bigr\}=\mu\in (0, \infty].
\ee
Assume either one of the following cases holds:
\begin{itemize}
\item[(B1)] Sub-logistic, logistic or super-logistic source, i.e.,  $\mu=\infty$;
\item[(B2)] proper sub-logistic source dominates or cancels   chemotactic aggregation, i.e., $0<\mu<\infty$ and $\mu\geq \chi$;
\item[(B3)]chemotactic aggregation dominates proper sub-logistic source and small initial mass, i.e., $\chi>\mu$,  $0<\mu<\infty$  and
 $$
(\chi-\mu)M<\frac{1}{2C_{GN}^4},
$$
where  $C_{GN}$ is the Gagliardo-Nirenberg  constant, $M$ is finite and is given by
\be\label{M-def}
  M=\|u_0\|_{L^1(\Omega)}+|\Omega|\inf_{\eta>0}\frac{\sup\{ f(s)+\eta s:\  s>0\}}{\eta}.
\ee
\end{itemize}
Then the  Keller-Segel chemotaxis-growth   model \eqref{sub-log-chemo-pe} has a unique global-in-time classical solution which is uniformly-in-time bounded in the following ways: there  exists a constant  $C=C(u_0,\tau v_0, |\Omega|, \chi, f)>0$   such that
\be\label{bdd-linfty}
\|u(\cdot,t)\|_{L^\infty(\Omega)}+\|v(\cdot,t)\|_{W^{1,r}(\Omega)}\leq C, \quad \quad  \forall t\in (0,\infty)
\ee
and, for any $\sigma>0$, there exists $C_\sigma=C(\sigma, u_0,\tau v_0, |\Omega|, \chi, f)>0$ such that
\be\label{bdd-linfty1}
\|u(\cdot,t)\|_{L^\infty(\Omega)}+\|v(\cdot,t)\|_{W^{1,\infty}(\Omega)}\leq C, \quad \quad  \forall t\in (\sigma,\infty).
\ee
\end{theorem}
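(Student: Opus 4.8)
The strategy is the standard three-step program for chemotaxis-growth systems: (i) record local existence and the extensibility criterion; (ii) establish an $L^p$ a priori bound for $u$ with $p>1$ sufficiently large (indeed, in 2-D one first aims at $p$ just above $1$, or more precisely an entropy-type $L\ln L$ bound, and then bootstraps); (iii) upgrade to $L^\infty(\Omega)$ via a Moser–Alikakos iteration or semigroup smoothing, and finally run elliptic/parabolic regularity on the $v$-equation to obtain the $W^{1,r}$ and $W^{1,\infty}$ bounds in \eqref{bdd-linfty}–\eqref{bdd-linfty1}. The first and last steps are routine for \eqref{sub-log-chemo-pe} and one may invoke the local theory and the well-known smoothing estimates for the Neumann heat semigroup; the entire novelty sits in step (ii), and specifically in producing a \emph{uniform-in-time} functional inequality that closes despite $f$ being only \emph{sub}-quadratic.

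The heart of the argument is a coupled energy-type estimate. The plan is to test the $u$-equation against $\ln u$ (equivalently, to track $\int_\Omega u\ln u$) and the $v$-equation against $-\Delta v$ (or against $v_t$ when $\tau>0$), producing the familiar identity
\be
\frac{d}{dt}\Bigl(\int_\Omega u\ln u + \frac{\chi}{2}\int_\Omega |\nabla v|^2\Bigr) + \int_\Omega \frac{|\nabla u|^2}{u} + \frac{\chi}{2}\int_\Omega |\nabla v|^2 + \chi\int_\Omega u^2 \leq \chi\int_\Omega u^2 - \int_\Omega \frac{f(u)}{u}\,(\cdots) + \text{l.o.t.},
\ee
where the crucial cancellation is that the bad term $\chi\int_\Omega u^2$ coming from the chemotaxis coupling is fought by the term generated by $f$. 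Here \eqref{log-sub-con} is used in the form: for every $\varepsilon>0$ there is $s_\varepsilon$ with $-f(s)\ge (\mu-\varepsilon)\frac{s^2}{\ln s}$ for $s\ge s_\varepsilon$ (with $\mu-\varepsilon$ replaced by an arbitrarily large constant if $\mu=\infty$). Substituting $w=\sqrt{u}$ turns $\int_\Omega |\nabla u|^2/u$ into $4\int_\Omega |\nabla w|^2$ and $\int_\Omega u^2$ into $\|w\|_{L^4}^4$, so the 2-D Gagliardo–Nirenberg inequality $\|w\|_{L^4}^4 \le C_{GN}^4 \|\nabla w\|_{L^2}^2\|w\|_{L^2}^2 + (\text{l.o.t.})$ converts the right-hand side into something controlled by $\bigl((\chi-\mu)^+\,\|u\|_{L^1}\,C_{GN}^4 - \tfrac12\bigr)\int_\Omega|\nabla w|^2$ plus lower-order terms; the smallness hypothesis in (B3), together with the key observation that $\|u(\cdot,t)\|_{L^1(\Omega)}\le M$ for all $t$ (which is exactly what the definition \eqref{M-def} of $M$ is engineered to give, via testing the $u$-equation with $1$ and using $f(s)+\eta s \le \sup\{f+\eta\cdot\}$), makes this coefficient negative. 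In cases (B1) and (B2) the coefficient $(\chi-\mu)^+=0$, so no smallness is needed at all and the GN term is simply absorbed. A Grönwall/uniform-Gronwall argument then yields a time-independent bound on $\int_\Omega u\ln u$ and $\int_\Omega|\nabla v|^2$.

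From the $L\ln L$ bound one bootstraps: standard parabolic smoothing for the $v$-equation upgrades $\nabla v$ to any $L^q$, then an $L^p$ testing of the $u$-equation (against $u^{p-1}$) for $p$ increased step by step, using the dissipation $\int |\nabla u^{p/2}|^2$ against the chemotaxis term $\chi\int u^p \Delta v$ handled via GN and the now-available $L^q$ bound on $\nabla v$ (together with the ever-present negative contribution of $f$, which in 2-D is more than enough once $p$ is finite), gives uniform $L^p$ bounds for every $p<\infty$; a final Moser iteration delivers \eqref{bdd-linfty}. The $W^{1,r}$ and $W^{1,\infty}$ conclusions for $v$ then follow from elliptic ($\tau=0$) or parabolic ($\tau>0$) regularity applied to $-\Delta v + v = u \in L^\infty$, the $(\sigma,\infty)$ restriction in \eqref{bdd-linfty1} absorbing the initial-layer loss of regularity when $v_0$ is only $W^{1,r}$.

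I expect the main obstacle to be the bookkeeping in step (ii): one must carefully split $\int_\Omega u^2 = \int_{\{u< s_\varepsilon\}} + \int_{\{u\ge s_\varepsilon\}}$ so that on the large-$u$ set the term $-\int f(u)\cdot(\text{something})$ genuinely dominates $\chi \int u^2$ up to the GN remainder, while on the bounded set everything is harmless; and one must verify that the "lower-order terms'' generated this way (which involve $\int_\Omega u$, controlled by $M$, and $\int_\Omega u^2$ restricted to $\{u<s_\varepsilon\}\le s_\varepsilon^2|\Omega|$) really are uniformly bounded in time, so that the Grönwall estimate is genuinely uniform rather than merely locally-in-time. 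The role of the logarithmic weight is precisely that $-f(s)\sim \mu s^2/\ln s$ is \emph{not} quite enough to beat $\chi s^2$ pointwise, but after the $\sqrt{u}$-substitution the $1/\ln s$ deficit is exactly absorbable by the GN interpolation — this matching is the delicate quantitative point, and it is where the constant $C_{GN}$ and the threshold $\tfrac{1}{2C_{GN}^4}$ enter.
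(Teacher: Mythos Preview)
Your proposal is correct and follows essentially the same route as the paper: the $L^1$ bound $\|u\|_{L^1}\le M$ via \eqref{M-def}, the coupled $u\ln u$--$|\nabla v|^2$ energy estimate closed through the substitution $w=\sqrt u$ and the 2-D Gagliardo--Nirenberg inequality under $(\chi-\mu)^+M<\tfrac{1}{2C_{GN}^4}$, then a bootstrap to $L^2$ and the standard $L^{n/2+}$ Moser-type criterion for $L^\infty$. One clarification to your last paragraph: the $1/\ln s$ deficit in $-f(s)\sim\mu s^2/\ln s$ is cancelled not by GN but by the $\ln u$ weight coming from the entropy testing itself (so that $(\ln u+1)f(u)\lesssim -(\mu-\varepsilon)u^2$ on $\{u\ge s_\varepsilon\}$); GN is then used only to absorb the \emph{residual} term $(\chi-\mu)^+\int_\Omega u^2$ into the Fisher information $4\int_\Omega|\nabla\sqrt u|^2$.
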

\begin{remark}\label{main-thm-rem} [Notes on blow-up prevention by sub-logistic sources in 2-D] Here and below, $\mu$ is understood as an extended real positive number. Hence, via the positive part function $a^+=\max\{a,0\}$, the cases (B1), (B2) and (B3) can be unified as
 \be\label{sub-log-general}
(\chi-\mu)^+M<\frac{1}{2C_{GN}^4}.
\ee
 The case that $\mu=\infty$ covers sub-logistic, logistic or super-logistic sources; typical examples are $f(s)=as-\frac{bs^2}{\ln^\gamma (s+1)}$ with $a\in \mathbb{R}, b>0, \gamma\in(0,1)$ for $s>0$, or $f(s)=as-\frac{bs^2}{\ln(\ln(s+e))}$ with $b>0$ etc (sub-logistic sources); $f(s)=a s-bs^\theta$ with $a\in \mathbb{R}$,$ b>0$ and $\theta\geq 2$ (logistic or super-logistic sources). In the last case, the boundedness of solutions to \eqref{sub-log-chemo-pe} in 2-D is well-known as mentioned before.

  We note also  that not all sub-logistic sources are included in Theorem \ref{main-bdd-thm}; for instance, growth sources like $f(s)=as-bs^\alpha$ with $a\in \mathbb{R}, b>0$ and $1<\alpha<2$ fail to fulfill \eqref{log-sub-con} since then  $\mu=0$. We leave this challenging problem (cf. remarks below)  as future research to explore  whether  or not the radical  opposite side of boundedness, namely, blow-up,  will occur for the 2-D chemotaxis  model  \eqref{sub-log-chemo-pe}. However, the main message that Theorem \ref{main-bdd-thm} conveys to us is that logistic damping is not the weakest damping to guarantee boundedness for the minimal chemotaxis-logistic   model  \eqref{sub-log-chemo-pe} in 2-D.
\end{remark}
 In higher dimensions,  the logistic effect on boundedness  becomes increasingly complex.   For  $n\geq 3$,  so far, it is only known that properly strong logistic damping can prevent blow-up for \eqref{KSM}. More specifically,  in the  case of $\tau=0$,  all classical solutions to  \ref{minimal-model} are uniformly bounded in time if
\be\label{mu-strong0}
  b\geq \frac{(n-2)}{n}\chi.
\ee
See, for instance, \cite{TW07, HT16, KS16, WXpre}.

  In the  parabolic-parabolic case ($\tau>0$), the issue becomes more delicate; for $n\geq 4$,  sufficiently strong logistic damping can prevent blow-up for \eqref{KSM} (cf. \cite{Win10, YCJZ15}), and, in the physically relevant case of  $n=3$, we have a compact formula:
  \be\label{mu-strong}
b> \left\{ \begin{array}{lll}
&\frac{3}{4}\chi, & \text{if }  \tau=1  \text{  and  }  \Omega \text{  is convex  \cite{Win10}},   \\[0.25cm]
&\frac{3(1+2\tau)}{(\sqrt{10}-2)\tau}\chi,  & \text{if  either }  \tau\neq 1  \text{  or   }  \Omega \text{  is  not convex  \cite{Xiangpre2}}.  \end{array}\right.
\ee
\begin{remark} From  Theorem \ref{main-bdd-thm} and the text before it, we see that the 2-D boundedness doesn't distinguish  between parabolic-elliptic case and parabolic-parabolic case. However, by \eqref{mu-strong0} and \eqref{mu-strong}, we see that higher-dimensional boundedness  distinguishes drastically between parabolic-elliptic case and parabolic-parabolic case (the convexity of $\Omega$  and the equality of diffusion rates of $u$ and $v$  also matter a lot).
\end{remark}
  For other aspects, we mention, in $3$-D bounded, smooth and convex domains, that logistic damping guarantees the existence of global weak solutions to \eqref{minimal-model}  \cite{La15-JDE}, and that sufficiently strong logistic damping can enhance  'expected' results such as global
existence and stabilization toward constant equilibria, as well as more
'unexpected' behavior witnessing a certain strength of chemotactic destabilization for \eqref{minimal-model}, cf.  \cite{Cao15, HP11, HZ16, La15-DCDS, TW15-JDE,WXpre,  Win14, Win14-JNS, Win17-DCDSB, Xiangpre2}.

 Finally,  we mention something that is related to Remark \ref{main-thm-rem}. One might think that blow-up may not be prevented by weak damping sources. However, since nice properties such as  energy-like structure possessed by \eqref{KSM} are destroyed by the presence of growth source,  blow-up  has not been rigorously detected to occur in any chemotaxis-growth system  when $n=2,3$. Up to now, only when $n\geq 5$, radially symmetrical blow-up is known to be possible in a parabolic-elliptic simplified variant of \eqref{minimal-model} under a proper sub-quadratic damping source: for the parabolic-elliptic chemotaxis-growth system
$$
u_t=\Delta u-\nabla\cdot(u\nabla v)+au-bu^\alpha, \ \ 0=\Delta v+u-m(t), \ \ m(t)=\frac{1}{\Omega}\int_\Omega u,
$$
radially symmetrical blow-up may occur for space-dimension $n\geq 5$ and exponents $1<\alpha< \frac{3}{2}+\frac{1}{2n-2}$ \cite{Win11}.  This doesn't contradict Theorem \ref{main-bdd-thm} since $\mu=0$ by \eqref{log-sub-con}.  Thus, the opposite side of Theorem \ref{main-bdd-thm}, namely, the occurrence of blow-up  is a challenging problem.

The rest of this paper is arranged as follows: in Section 2, we collect the standard  Gagliardo-Nirenberg interpolation inequality and  the local well-posedness of \eqref{sub-log-chemo-pe}. In Section 3, we show the  proof of Theorem \ref{main-bdd-thm}; the key point consists in deriving a uniform-in-time estimate for $u\ln u$ under \eqref{sub-log-general}, afterward, we illustrate two commonly used  approaches, cf.  \cite{OTYM02, TW12, Xiangpre},  to establish the $L^2$-boundedness $u$, and then a simple application of the widely known  $L^{\frac{n}{2}+}$-boundedness criterion with $n=2$ (cf. \cite{BBTW15, Xiangpre}) obtained via Moser type iteration technique, we achieve the  global existence and  boundedness as in \eqref{bdd-linfty} and \eqref{bdd-linfty1} of solutions to \eqref{sub-log-chemo-pe}.
\section{Preliminaries}

For convenience, we state  the well-known Gagliardo-Nirenberg  inequality:
\begin{lemma}\label{GN-inter}(Gagliardo-Nirenberg interpolation inequality \cite{Fried,  Nirenberg66}) Let $p\geq 1$ and  $q\in (0,p)$. Then there exist a positive constant   $C_{GN}$ depending on $p$ and $q$ such that
 $$
 \|w\|_{L^p} \leq C_{GN}\Bigr(\|\bigtriangledown w\|_{L^2}^{\delta}\|w\|_{L^q}^{(1-\delta) }+\|w\|_{L^s}\Bigr), \quad \forall w\in H^1\cap L^q,
 $$
where  $s>0$ is arbitrary and  $\delta$ is given by
 $$
 \frac{1}{p}=\delta(\frac{1}{2}-\frac{1}{n})+\frac{1-\delta}{q}\Longleftrightarrow \delta=\frac{\frac{1}{q}-\frac{1}{p}}{\frac{1}{q}-\frac{1}{2}+\frac{1}{n}}\in(0,1).
 $$
\end{lemma}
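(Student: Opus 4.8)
The plan is to reduce the bounded-domain inequality to the scale-invariant inequality on all of $\mathbb{R}^n$ and to read off the exponent $\delta$ from dimensional analysis. Testing the desired bound against the dilation $w\mapsto w(\lambda\,\cdot)$, the three norms scale as $\lambda^{-n/p}$, $\lambda^{1-n/2}$ and $\lambda^{-n/q}$; demanding that the product term $\|\nabla w\|_{L^2}^{\delta}\|w\|_{L^q}^{1-\delta}$ transform like $\|w\|_{L^p}$ forces $-\tfrac{n}{p}=\delta(1-\tfrac n2)-(1-\delta)\tfrac nq$, which rearranges to the displayed formula for $\delta$. This also explains the shape of the statement: since the product term matches $\|w\|_{L^p}$ under dilations only for this single $\delta$, and a bounded domain admits no global dilation, the inequality there must carry an additive lower-order term $\|w\|_{L^s}$ to compensate for the broken scaling.

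First I would establish the clean inequality $\|w\|_{L^p(\mathbb{R}^n)}\le C\,\|\nabla w\|_{L^2(\mathbb{R}^n)}^{\delta}\|w\|_{L^q(\mathbb{R}^n)}^{1-\delta}$ for $w\in C_c^\infty(\mathbb{R}^n)$. The engine is the Gagliardo fundamental estimate $\|v\|_{L^{n/(n-1)}}\le\prod_{i=1}^n\|\partial_i v\|_{L^1}^{1/n}\le\|\nabla v\|_{L^1}$; applying it to $v=|w|^\gamma$ and pairing $|\nabla w|$ with $|w|^{\gamma-1}$ through Hölder's inequality with conjugate exponents $2,2$ yields the one-parameter family $\|w\|_{L^{\gamma n/(n-1)}}^{\gamma}\le C\,\|w\|_{L^{2(\gamma-1)}}^{\gamma-1}\|\nabla w\|_{L^2}$. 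When $n\ge 3$ the most transparent route is to take $\gamma$ so as to recover the Sobolev embedding $\|w\|_{L^{2^*}}\le C\|\nabla w\|_{L^2}$ with $\tfrac{1}{2^*}=\tfrac12-\tfrac1n$, and then to interpolate by Hölder between $L^q$ and $L^{2^*}$, writing $\tfrac1p=\tfrac{1-\theta}{q}+\tfrac{\theta}{2^*}$ so that $\|w\|_{L^p}\le\|w\|_{L^q}^{1-\theta}\|w\|_{L^{2^*}}^{\theta}$. Comparison with the scaling relation shows $\theta=\delta$, and the admissibility $\theta\in(0,1)$ of the interpolation is precisely the constraint $\delta\in(0,1)$, i.e. $q<p<2^*$, which is exactly the range recorded in the statement. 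For $n\le 2$, where $2^*=\infty$ and the embedding reaches only every finite exponent, the same $\delta$ is obtained directly from the power-trick family together with a further Hölder interpolation ($L^\infty$ replacing $L^{2^*}$ when $n=1$). A density argument extends the estimate to all $w\in H^1\cap L^q(\mathbb{R}^n)$.

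Next I would transfer to $\Omega$ by means of a bounded linear extension operator $E\colon W^{1,2}(\Omega)\to W^{1,2}(\mathbb{R}^n)$ that is also bounded from $L^q(\Omega)$ into $L^q(\mathbb{R}^n)$ (the Stein extension, available since $\Omega$ is smooth and bounded). Applying the whole-space inequality to $Ew$ and restricting, together with $\|\nabla(Ew)\|_{L^2(\mathbb{R}^n)}\le C(\|\nabla w\|_{L^2(\Omega)}+\|w\|_{L^2(\Omega)})$ and $\|Ew\|_{L^q(\mathbb{R}^n)}\le C\|w\|_{L^q(\Omega)}$, gives $\|w\|_{L^p(\Omega)}\le C(\|\nabla w\|_{L^2}+\|w\|_{L^2})^{\delta}\|w\|_{L^q}^{1-\delta}$. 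The elementary subadditivity $(a+b)^\delta\le a^\delta+b^\delta$, valid for $\delta\in(0,1)$, then splits the right-hand side into the desired principal term $C\|\nabla w\|_{L^2}^{\delta}\|w\|_{L^q}^{1-\delta}$ plus a derivative-free remainder $C\|w\|_{L^2}^{\delta}\|w\|_{L^q}^{1-\delta}$.

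The one genuinely delicate point, and the step I expect to cost the most care, is to absorb this derivative-free remainder into the left-hand side in the form $\tfrac12\|w\|_{L^p(\Omega)}+C_s\|w\|_{L^s(\Omega)}$ for an arbitrary prescribed $s>0$. Here Young's inequality reduces the product $\|w\|_{L^2}^{\delta}\|w\|_{L^q}^{1-\delta}$ to the single norms $\|w\|_{L^2}$ and $\|w\|_{L^q}$, after which the boundedness of $\Omega$ makes the comparison of $L^r(\Omega)$-norms available in the needed direction: a norm with smaller exponent is controlled by one with larger exponent, while the remaining comparisons follow by Hölder interpolation between $L^s$ and $L^p$ and a final Young splitting that produces the absorbable $\tfrac12\|w\|_{L^p}$. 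All constants arising this way depend only on $p,q,s,\delta$ and $|\Omega|$, which is exactly the claimed dependence of $C_{GN}$. I would emphasize that the essential analytic content sits entirely in the scale-invariant whole-space estimate; the bounded-domain passage, including the low-dimensional adjustments where $2^*=\infty$, is routine but requires disciplined exponent bookkeeping.
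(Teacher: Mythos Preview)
The paper does not supply its own proof of this lemma; it is recorded as a standard tool with citations to Friedman and Nirenberg and then used repeatedly in Section~3. Your outline therefore provides an argument where the paper offers none, and the route you take---scale-invariant estimate on $\mathbb{R}^n$ via the Gagliardo--Sobolev embedding, transfer to $\Omega$ through a Stein extension, then absorption of the lower-order remainder---is the standard modern proof and is correct in substance.

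One point deserves sharpening. Your absorption step interpolates the remainder factors $\|w\|_{L^2}$ and $\|w\|_{L^q}$ between $L^s$ and $L^p$ and then peels off an $\varepsilon\|w\|_{L^p}$ by Young's inequality. This is fine when $2\le p$, but the lemma is stated for all $p\ge 1$; if $1\le p<2$ the exponent $2$ no longer lies between $s$ and $p$, and the interpolation is unavailable. The clean repair on a bounded domain is Poincar\'e--Wirtinger: writing $\|w\|_{L^2(\Omega)}\le C\|\nabla w\|_{L^2(\Omega)}+C|\Omega|^{1/2}\,|\bar w|$ with $\bar w=|\Omega|^{-1}\int_\Omega w$, the gradient contribution folds back into the principal term $\|\nabla w\|_{L^2}^{\delta}\|w\|_{L^q}^{1-\delta}$, while $|\bar w|\le |\Omega|^{-1}\|w\|_{L^1}\le C\|w\|_{L^s}$ whenever $s\ge 1$. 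For the residual case $s<1$ (or $q<1$), where $L^s$ is only a quasi-norm and $\|w\|_{L^1}$ is no longer dominated by $\|w\|_{L^s}$, the extension route becomes awkward and one usually reverts to Nirenberg's original one-dimensional iteration, which handles sub-unit exponents directly. None of this matters for the paper's applications, which all take $p\in\{3,4\}$ and $q,s\in\{1,2\}$, but if you intend your write-up to cover the lemma as stated you should insert the Poincar\'e--Wirtinger step and flag the $q,s<1$ regime separately.
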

 The  local  solvability and extendibility   of  the chemotaxis-growth  system \eqref{sub-log-chemo-pe} is well-established by using a suitable fixed point argument and standard parabolic regularity theory; see, for example, \cite{HW05, TW07, Win10}.
\begin{lemma}\label{local-in-time}Let $\tau\geq 0, \chi\geq 0$  and let  $\Omega\subset \mathbb{R}^n(n\geq 1)$ be a bounded domain with a smooth boundary. Suppose  that the initial data $(u_0,v_0)$ satisfies $u_0\in C(\overline{\Omega})$  and $v_0\in W^{1,r}(\Omega)$ with some $r>n$ and that $f\in W^{1,\infty}_{\mbox{loc}}(\mathbb{R})$ with $f(0)\geq 0$. Then there is a unique,  nonnegative,  classical maximal solution $(u,v)$ of the IBVP \eqref{sub-log-chemo-pe} on some maximal interval $[0, T_m)$ with $0<T_m \leq \infty$ such that
$$\ba{ll}
u\in C(\overline{\Omega}\times [0, T_m)\cap C^{2,1}(\overline{\Omega}\times (0, T_m)), \\[0.2cm]
v\in C(\overline{\Omega}\times [0, T_m))\cap C^{2,1}(\overline{\Omega}\times (0, T_m))\cap L_{\text{loc}}^\infty([0, T_m); W^{1,r}(\Omega)).
\ea $$
In particular,  if $T_m<\infty$, then
$$ \|u(\cdot, t)\|_{L^\infty(\Omega)}+\|v(\cdot, t)\|_{W^{1,r}(\Omega)}\rightarrow \infty \quad \quad \mbox{ as }  t\rightarrow T_m^{-}.
$$
\end{lemma}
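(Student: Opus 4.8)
The plan is to recast \eqref{sub-log-chemo-pe} as a fixed-point problem through the variation-of-constants (Duhamel) formula and to run a Banach contraction argument, exactly along the lines of \cite{HW05, TW07, Win10}. Writing $(e^{t\Delta})_{t\geq0}$ for the Neumann heat semigroup on $\Omega$, a mild solution is a fixed point of the map $\Phi(u,v)=(\Phi_1(u,v),\Phi_2(u,v))$ with
$$\Phi_1(u,v)(t)=e^{t\Delta}u_0-\chi\int_0^t e^{(t-s)\Delta}\nabla\cdot\bigl(u(s)\nabla v(s)\bigr)\,ds+\int_0^t e^{(t-s)\Delta}f(u(s))\,ds,$$
and, when $\tau>0$, $\Phi_2(u,v)(t)=e^{\frac{t}{\tau}(\Delta-1)}v_0+\frac1\tau\int_0^t e^{\frac{t-s}{\tau}(\Delta-1)}u(s)\,ds$; in the parabolic--elliptic case $\tau=0$ one simply sets $v=(I-\Delta)^{-1}u$ via the positivity-preserving Neumann resolvent, so that both regimes can be treated in parallel. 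I would seek the fixed point in the closed ball of radius $R$ (centered at the data) in the space $X_T=C([0,T];C(\overline{\Omega}))\times C([0,T];W^{1,r}(\Omega))$, choosing $R$ in terms of the initial norms and then $T$ small.

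The technical heart is to estimate the three terms of $\Phi_1$ so that $\Phi$ maps the ball into itself and is a contraction. First I would recall the standard $L^p$--$L^q$ smoothing estimates for $(e^{t\Delta})$ and, crucially, the estimate for the semigroup composed with the divergence, $\|e^{t\Delta}\nabla\cdot w\|_{L^q}\leq C\,t^{-\frac12-\frac n2(\frac1p-\frac1q)}\|w\|_{L^p}$, whose time singularity is integrable near $s=t$ precisely because the exponent stays above $-1$ for an admissible choice of $p,q$. The condition $r>n$ is what makes the argument close: by Morrey's embedding $W^{1,r}(\Omega)\hookrightarrow C(\overline{\Omega})$, so $\nabla v\in L^r$ and $u\in C(\overline{\Omega})$ control $u\nabla v$ in $L^r$, while the gradient smoothing of the $v$-equation (or elliptic regularity when $\tau=0$) returns $\nabla v$ in the requisite $W^{1,r}$-type norm. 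Since $f\in W^{1,\infty}_{\mathrm{loc}}(\mathbb{R})$ is locally Lipschitz, the reaction term is handled by the ordinary $L^\infty$ smoothing. These bounds, each carrying a positive power of $T$, give self-mapping and contraction for $T=T(R)$ small; the Banach fixed-point theorem then yields a unique local mild solution, and uniqueness in $X_T$ follows from the same Lipschitz estimates via Gronwall.

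Next I would upgrade the mild solution to the asserted classical regularity by a standard parabolic bootstrap. With $u\in C(\overline{\Omega}\times[0,T])$ bounded, the linear $v$-equation yields Hölder regularity of $v$ and then of $\nabla v$ on $\overline{\Omega}\times(0,T)$; feeding this into the $u$-equation, rewritten in non-divergence form $u_t=\Delta u-\chi\nabla v\cdot\nabla u-\chi u\,\Delta v+f(u)$ with Hölder coefficients, Schauder estimates give $u\in C^{2,1}(\overline{\Omega}\times(0,T))$, and a further pass gives $v\in C^{2,1}$; the interior smoothing explains why full regularity holds only for $t>0$. Nonnegativity then comes from the parabolic maximum principle: since $f(0)\geq0$, the constant $0$ is a subsolution of the (now linear, bounded-coefficient) $u$-equation, so $u\geq0$, whereupon $v\geq0$ follows from the maximum principle for the $v$-equation when $\tau>0$ and from positivity of the resolvent when $\tau=0$.

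Finally, to obtain the maximal solution and the blow-up criterion I would argue by continuation. Defining $T_m$ as the supremum of existence times, the decisive point is that the local existence time $T(R)$ above depends only on a bound $R$ for $\|u_0\|_{L^\infty(\Omega)}+\|v_0\|_{W^{1,r}(\Omega)}$. Hence if $T_m<\infty$ while $\|u(\cdot,t)\|_{L^\infty(\Omega)}+\|v(\cdot,t)\|_{W^{1,r}(\Omega)}$ remained bounded as $t\to T_m^-$, then for $t_0$ close to $T_m$ the pair $(u(\cdot,t_0),v(\cdot,t_0))$ would serve as admissible initial data in $C(\overline{\Omega})\times W^{1,r}(\Omega)$ and could be continued over a further interval of length $T(R)$, pushing the solution past $T_m$ and contradicting maximality; therefore the stated norm must blow up. The main obstacle throughout is the divergence-form cross-diffusion term: controlling $e^{(t-s)\Delta}\nabla\cdot(u\nabla v)$ with an integrable time singularity while keeping $u$ and $\nabla v$ in mutually compatible spaces is exactly where the hypothesis $r>n$ and the choice of $X_T$ must be orchestrated, and it is the only place where the chemotactic coupling genuinely enters the local theory.
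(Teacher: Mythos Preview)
Your proposal is correct and follows exactly the route the paper indicates: the paper does not actually prove this lemma but merely states that it is ``well-established by using a suitable fixed point argument and standard parabolic regularity theory'' and cites \cite{HW05, TW07, Win10}. Your sketch---Duhamel formulation, Banach contraction in $C([0,T];C(\overline{\Omega}))\times C([0,T];W^{1,r}(\Omega))$ using the semigroup-divergence smoothing estimate and the embedding $W^{1,r}\hookrightarrow C$ for $r>n$, then Schauder bootstrap, maximum principle for nonnegativity via $f(0)\geq 0$, and the standard continuation argument for the blow-up alternative---is precisely the argument carried out in those references.
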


\section{Blow-up prevention by sub-logistic sources}
This  section  is devoted to the  proof of Theorem \ref{main-bdd-thm}.  Unless otherwise specified, we shall assume all the conditions in Lemma \ref{local-in-time} and Theorem \ref{main-bdd-thm} are fulfilled. As usual, we begin with the $L^1$-norm of $u$, and we have the following lemma:
\begin{lemma}\label{ul1-bdd} The $L^1$-norm of $u$ and $L^2$-norm of $v$ are uniformly bounded obeying
\be\label{ul1-est-sub}
\| u(t)\|_{L^1}\leq M, \quad \forall t\in(0,T_m),
\ee
where $M$ defined by \eqref{M-def}, and, there exists $C=C(u_0,\tau v_0, |\Omega|, f)>0$ such that
\be\label{vl2-est-sub}
\| v(t)\|_{L^2}\leq C, \quad \forall t\in(0,T_m).
\ee
\end{lemma}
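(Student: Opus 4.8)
The plan is to integrate the first equation of \eqref{sub-log-chemo-pe} over $\Omega$ and exploit the no-flux boundary conditions. Since $\int_\Omega \Delta u = 0$ and $\int_\Omega \nabla\cdot(u\nabla v) = 0$ by the Neumann conditions, one obtains the ODI
$$
\frac{d}{dt}\int_\Omega u = \int_\Omega f(u), \qquad t\in(0,T_m).
$$
The key observation is that for every $\eta>0$ one may write $f(u) \le \bigl(\sup_{s>0}\{f(s)+\eta s\}\bigr) - \eta u$ pointwise, so that, setting $K_\eta := \sup_{s>0}\{f(s)+\eta s\}$ (finite for $\eta$ large enough because of \eqref{log-sub-con}, which forces $f(s)\to -\infty$ superlinearly), we get
$$
\frac{d}{dt}\int_\Omega u \le \eta|\Omega|\,\frac{K_\eta}{\eta} - \eta\int_\Omega u.
$$
A direct Gronwall/comparison argument on this linear ODI then yields $\int_\Omega u(t) \le \max\bigl\{\|u_0\|_{L^1}, \ |\Omega| K_\eta/\eta\bigr\}$, and in fact the sharper bound $\int_\Omega u(t) \le \|u_0\|_{L^1} + |\Omega| K_\eta/\eta$ holds for all $t$; taking the infimum over admissible $\eta>0$ gives \eqref{ul1-est-sub} with $M$ as in \eqref{M-def}. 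I should remark that the finiteness of $M$ (claimed in (B3)) follows because \eqref{log-sub-con} guarantees $\liminf_{s\to\infty}\{-f(s)\ln s / s^2\}>0$, hence $f(s)+\eta s \to -\infty$ as $s\to\infty$ for any fixed $\eta$, so the supremum defining $K_\eta$ is attained and finite; combined with $f\in W^{1,\infty}_{\mathrm{loc}}$ and $f(0)\ge 0$ this also handles the behaviour near $s=0$.

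For the $L^2$-bound on $v$, I would treat the two cases $\tau=0$ and $\tau>0$ separately. When $\tau=0$, the elliptic equation $-\Delta v + v = u$ together with elliptic regularity and the continuous embedding (in dimension $2$) immediately gives $\|v\|_{L^2} \le C\|u\|_{(W^{1,2})^*}$ or, more simply, testing the elliptic equation with $v$ yields $\|\nabla v\|_{L^2}^2 + \|v\|_{L^2}^2 = \int_\Omega uv \le \|u\|_{L^1}\|v\|_{L^\infty}$; a slightly more careful duality/elliptic estimate (testing with $v$ and using $\|v\|_{L^2}\lesssim \|u\|_{L^1}$ via the representation through the Neumann heat semigroup, or simply $L^p$-elliptic theory with $u\in L^1$) converts the $L^1$-bound \eqref{ul1-est-sub} into \eqref{vl2-est-sub}. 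When $\tau>0$, I would test the $v$-equation with $v$ to get
$$
\frac{\tau}{2}\frac{d}{dt}\int_\Omega v^2 + \int_\Omega|\nabla v|^2 + \int_\Omega v^2 = \int_\Omega uv,
$$
then bound $\int_\Omega uv \le \|u\|_{L^1}^{?}\cdots$ — more precisely, use the Gagliardo–Nirenberg inequality of Lemma \ref{GN-inter} to control $\|v\|_{L^\infty}$ or $\|v\|_{L^4}^2$ by $\|\nabla v\|_{L^2}\|v\|_{L^2}$ plus lower order terms, absorb the gradient term, and close a Gronwall inequality using the already-established uniform bound on $\|u\|_{L^1}$. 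This gives a uniform-in-time $L^2$-bound for $v$, with the constant depending on $\tau v_0$, $|\Omega|$, and $f$ (through $M$).

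The main obstacle is not conceptual but bookkeeping: making the reduction $f(u)\le K_\eta - \eta u$ rigorous requires knowing $K_\eta<\infty$, which is exactly where hypothesis \eqref{log-sub-con} enters, and one must be careful that the infimum over $\eta$ in \eqref{M-def} is genuinely finite (otherwise (B3) would be vacuous). In the $\tau>0$ case for $v$, the only delicate point is the absorption of $\int_\Omega uv$: in $2$-D one cannot bound this purely by $L^1$-data and must interpolate, so one should either proceed as above via Gagliardo–Nirenberg with the small-constant absorption of $\|\nabla v\|_{L^2}^2$, or — cleaner — invoke smoothing estimates for the Neumann heat semigroup $e^{t(\Delta-1)}$ acting on $u\in L^1$ to get $\|v(t)\|_{L^2}\le C\bigl(\|v_0\|_{L^2} + \sup_{s<t}\|u(s)\|_{L^1}\bigr)$ directly. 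Either route is standard once \eqref{ul1-est-sub} is in hand; I would present the semigroup version for brevity.
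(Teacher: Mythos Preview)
Your $L^1$ argument for $u$ is identical to the paper's, and your committed semigroup route for $\|v\|_{L^2}$ when $\tau>0$ is correct---the paper even notes it parenthetically as a valid alternative. The paper, however, writes out a different energy argument: it first obtains $\|v\|_{L^1}\le C$ by integrating the $v$-equation, then tests with $v$ and splits $\int_\Omega uv \le \epsilon\int_\Omega v^3 + C_\epsilon\int_\Omega u^{3/2}$ via Young; the $v^3$ term is controlled by Gagliardo--Nirenberg interpolated against the $L^1$ bound on $v$ (giving $\|v\|_{L^3}^3 \le C\|\nabla v\|_{L^2}^2 + C$), while the $u^{3/2}$ term is absorbed by \emph{coupling} with the $u$-ODI, using that \eqref{log-sub-con} forces $\sup_{s>0}\{f(s)+s+C_\epsilon s^{3/2}\}<\infty$, to close a Gronwall inequality for $\int_\Omega(u+\tfrac{\tau}{2}v^2)$.

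Your sketched energy option (a), by contrast, would not close as stated: in two dimensions $H^1\not\hookrightarrow L^\infty$, and using $\|v\|_{L^4}$ would require $u\in L^{4/3}$, which you do not yet have. With only $\|u\|_{L^1}$ available, $\int_\Omega uv$ cannot be bounded by terms absorbable into $\|\nabla v\|_{L^2}^2+\|v\|_{L^2}^2$ without the two extra ingredients the paper supplies (the $L^1$ bound on $v$ and the coupling with the $u$-equation). The semigroup route you ultimately choose sidesteps this entirely and is shorter; the paper's energy route is more self-contained but needs that coupling step.
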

\begin{proof}Integrating the $u$-equation in \eqref{sub-log-chemo-pe} and using the homogeneous Neumann boundary conditions,  we obtain a Gronwall ineqaulity, for any $\eta>0$, that
\be\label{ul1-diff}
\frac{d}{dt}\int_\Omega u= \int_\Omega f(u)\leq -\eta \int_\Omega u+M_\eta|\Omega|,
\ee
which simply yields
$$
\int_\Omega u\leq \int_\Omega u_0+\frac{M_\eta}{\eta}|\Omega|.
$$
This, upon taking infimum over $\eta>0$ and recalling the definition of $M$ in \eqref{M-def}, implies the $L^1$-bound of $u$ as stated in \eqref{ul1-est-sub}. Here, due to \eqref{log-sub-con},
$$
M_\eta=\sup\Bigr\{ f(s)+\eta s:\  s>0\Bigr\}<\infty.
$$
Indeed, the definition of $\mu$ in  \eqref{log-sub-con} gives rise to
\be\label{f-asy-conl1}
\exists \hat{s}\gg 1\text{  s.t.   }  f(s)\leq -\hat{\mu}\frac{s^2}{\ln s}, \quad  \forall s\geq \hat{s},
\ee
where $\hat{\mu}=\frac{\mu}{2}$ if $0<\mu<\infty$ and $\hat{\mu}$ equals anything larger than $\chi$ if $\mu=\infty$.

This  directly entails
$$
f(s)+\eta s\leq -\hat{\mu}\frac{s^2}{\ln s}+\eta s<0, \quad \quad \forall s\geq \hat{s},
$$
which in conjunction with the fact that $f$ is bounded on any finite interval implies that $M_\eta$ is finite.

Next, since $\|u\|_{L^1}$ is bounded, the $L^1$-boundedness of $v$ follows from
$$
\tau\frac{d}{dt}\int_\Omega v+\int_\Omega v=\int_\Omega u\leq M.
$$
In the case of $\tau=0$, the $L^1$-boundedness of $u$ and the elliptic estimate applied to the $v$-equation show immediately that $\|v\|_{L^2}$ is bounded.  In the case of $\tau>0$, we (can alternatively use the Neumann heat semigroup type argument to get \eqref{vl2-est-sub}) multiply the $v$-equation by $v$ and integrate parts to deduce
\be\label{vl2-test}
\frac{\tau}{2}\frac{d}{dt}\int_\Omega   v^2+\int_\Omega   v^2+\int_\Omega |\nabla v|^2= \int_\Omega   u  v\leq \epsilon \int_\Omega   v^3+\frac{2}{3\sqrt{3\epsilon}}\int_\Omega   u^\frac{3}{2}, \quad \forall \epsilon>0,
\ee
 where we have applied the Young's inequality with epsilon:
\be\label{Young}
ab\leq \epsilon a^p+\frac{b^q}{(\epsilon p)^{\frac{q}{p}}q},  \ \ p>0, q>0, \frac{1}{p}+\frac{1}{q}=1,    \quad \forall a,b\geq 0.
\ee
By the standard  Gaglarido-Nirenberg  inequality in Lemma \ref{GN-inter} with $n=2$ and the $L^1$-bound of $v$, we conclude
$$
 \|v\|_{L^3}^3
  \leq C_{GN}(\|\nabla v\|_{L^2}^{\frac{2}{3}}\|v\|_{L^1}^\frac{1}{3}+\| v\|_{L^1})^3
\leq C\|\nabla v\|_{L^2}^2+C.
 $$
On the other hand, it is simple to see from \eqref{f-asy-conl1} that
$$
A_\epsilon:=\sup\Bigr\{ f(s)+s+\frac{2}{3\sqrt{3\epsilon}} s^\frac{3}{2}:\  s>0\Bigr\}<\infty.
$$
By taking sufficiently small $\epsilon>0$, combing these two inequalities above with \eqref{ul1-diff} and \eqref{vl2-test}, we finally derive a differential inequality as follows:
$$
\frac{d}{dt} \int_\Omega \Bigr(u+\frac{\tau}{2} v^2\Bigr)+\min\{1, \frac{2}{\tau}\}\int_\Omega \Bigr(u+\frac{\tau}{2} v^2\Bigr)\leq C_\epsilon.
$$
Solving this standard Gronwall inequality, we readily obtain the boundedness of $\|u\|_{L^1}+\|v\|_{L^2}$, and so the $L^2$-boundedness of $v$ as in \eqref{vl2-est-sub} follows.
\end{proof}
Following the common way, cf. \cite{OTYM02,  Xiangpre}, based on the $L^1$-estimate of $u$ as gained in Lemma \ref{ul1-bdd}, we proceed to show the uniform boundedness of $\|u\ln u\|_{L^1}$
\begin{lemma}\label{ulnu-bdd}  There exists $C=C(u_0,\tau v_0, |\Omega|, \chi, f)>0$ such that
\be\label{ulnu-gradvl2-est}
\left\| (u\ln u)(t))\right\|_{L^1}+\tau \chi \|\nabla v\|_{L^2}^2\leq C, \quad \quad \forall t\in(0,T_m).
\ee
\end{lemma}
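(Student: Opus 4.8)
The plan is to run the standard entropy estimate for the chemotaxis--growth system, tracking the functional
\[
Y(t):=\int_\Omega u\ln u+\frac{\tau\chi}{2}\int_\Omega|\nabla v|^2+\frac{\tau\chi}{2}\int_\Omega v^2 ,
\]
and to close a Gronwall inequality for $Y$ using only the already--proved $L^1$--bound $\|u\|_{L^1}\le M$ (Lemma~\ref{ul1-bdd}), the $L^2$--bound for $v$, and the sub-logistic asymptotics \eqref{log-sub-con}. First I would test the $u$--equation of \eqref{sub-log-chemo-pe} against $1+\ln u$ (after the routine regularization $\ln u\mapsto\ln(u+\delta)$, $\delta\downarrow0$), which yields
\[
\frac{d}{dt}\int_\Omega u\ln u+4\int_\Omega|\nabla\sqrt u|^2=\chi\int_\Omega\nabla u\cdot\nabla v+\int_\Omega(1+\ln u)f(u)=-\chi\int_\Omega u\,\Delta v+\int_\Omega(1+\ln u)f(u).
\]
Substituting $\Delta v=\tau v_t+v-u$ from the $v$--equation turns $-\chi\int_\Omega u\Delta v$ into $\chi\int_\Omega u^2-\chi\int_\Omega uv-\tau\chi\int_\Omega u\,v_t$; the term $-\chi\int_\Omega uv\le0$ is discarded, and for $-\tau\chi\int_\Omega u\,v_t$ one inserts $u=\tau v_t-\Delta v+v$ to obtain the pointwise-in-time identity $\int_\Omega u\,v_t=\tau\int_\Omega v_t^2+\tfrac12\tfrac{d}{dt}\int_\Omega|\nabla v|^2+\tfrac12\tfrac{d}{dt}\int_\Omega v^2$. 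This moves the $v$--contributions into $Y$ and produces a favorable $-\tau^2\chi\int_\Omega v_t^2$, leaving
\[
\frac{d}{dt}Y+4\int_\Omega|\nabla\sqrt u|^2+\tau^2\chi\int_\Omega v_t^2\le\chi\int_\Omega u^2+\int_\Omega(1+\ln u)f(u).
\]
(When $\tau=0$ the $v$--terms drop out, $\Delta v=v-u$ directly, and the same display holds with $\tau\chi\|\nabla v\|_{L^2}^2=0$, consistent with the claimed bound.)

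\textbf{Using the sub-logistic structure.} From \eqref{log-sub-con} I would extract, for any $\hat\mu<\mu$ chosen as close to $\mu$ as convenient (any $\hat\mu$ if $\mu=\infty$), a threshold $\hat s\ge e$ with $f(s)\le-\hat\mu s^2/\ln s$, hence $(1+\ln s)f(s)\le-\hat\mu s^2$ for $s\ge\hat s$. On the remaining range $0<s\le\hat s$ the quantity $(1+\ln s)f(s)$ is bounded above: on $[1/e,\hat s]$ by continuity of $f$, and on $(0,1/e)$ using $f(0)\ge0$ together with the local Lipschitz property $f\in W^{1,\infty}_{\mathrm{loc}}$ (so $|f(s)|\le Ls$ near $0$ when $f(0)=0$, while $f>0$ near $0$ when $f(0)>0$). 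Splitting $\Omega$ into $\{u\ge\hat s\}$ and its complement therefore gives $\int_\Omega(1+\ln u)f(u)\le-\hat\mu\int_\Omega u^2+C$, so the right-hand side of the previous display becomes $(\chi-\hat\mu)\int_\Omega u^2+C$.

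\textbf{Absorption and conclusion.} In cases (B1) and (B2) one may take $\hat\mu\ge\chi$ (or, when $\mu=\chi$, arbitrarily close to $\chi$), so $(\chi-\hat\mu)\int_\Omega u^2$ is either nonpositive or negligibly small and is immediately absorbed. In case (B3) the essential step is to control $(\chi-\hat\mu)\int_\Omega u^2=(\chi-\hat\mu)\|\sqrt u\|_{L^4}^4$ by the dissipation $4\int_\Omega|\nabla\sqrt u|^2$: applying Lemma~\ref{GN-inter} with $p=4$, $q=2$, $n=2$ (hence $\delta=\tfrac12$), then $(a+b)^4\le8a^4+8b^4$, and $\|\sqrt u\|_{L^2}^2=\|u\|_{L^1}\le M$, one gets
\[
(\chi-\hat\mu)\int_\Omega u^2\le 8(\chi-\hat\mu)\,C_{GN}^4M\int_\Omega|\nabla\sqrt u|^2+C .
\]
Since $(\chi-\mu)M<\tfrac{1}{2C_{GN}^4}$ by \eqref{sub-log-general}, one fixes $\hat\mu<\mu$ so close to $\mu$ that $8(\chi-\hat\mu)C_{GN}^4M<4$, leaving a strictly positive multiple of $\int_\Omega|\nabla\sqrt u|^2$. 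To turn the resulting inequality into an absolute bound I would add $\lambda Y$ to both sides for small $\lambda>0$: $\lambda\int_\Omega u\ln u\le\varepsilon\int_\Omega u^2+C_\varepsilon$ is absorbed by the same Gagliardo--Nirenberg mechanism (eating a little of the leftover gradient term); $\tfrac{\lambda\tau\chi}{2}\|v\|_{L^2}^2\le C$ by Lemma~\ref{ul1-bdd}; and $\tfrac{\lambda\tau\chi}{2}\|\nabla v\|_{L^2}^2$ is controlled by writing $\|\nabla v\|_{L^2}^2=-\int_\Omega v(\tau v_t+v-u)\le\sigma\tau^2\|v_t\|_{L^2}^2+\varepsilon\int_\Omega u^2+C$ (using $\|v\|_{L^2}\le C$), the $v_t^2$ piece being swallowed by the reserved $\tau^2\chi\int_\Omega v_t^2$ for $\lambda$ small. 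This produces $\tfrac{d}{dt}Y+\lambda Y\le C$, hence $Y(t)\le Y(0)+C/\lambda$ on $(0,T_m)$ with $Y(0)$ finite by the initial regularity. Since $u\ln u\ge-1/e$ pointwise, an upper bound on $\int_\Omega u\ln u$ is equivalent to a bound on $\|u\ln u\|_{L^1}$, and $\tfrac{\tau\chi}{2}\|\nabla v\|_{L^2}^2\le Y$ supplies the second term of \eqref{ulnu-gradvl2-est}.

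\textbf{Main obstacle.} I expect the delicate point to be the parabolic-parabolic bookkeeping: one must verify that the $\|\nabla v\|_{L^2}^2$ generated when disposing of $\int_\Omega u\,v_t$ is genuinely absorbable --- i.e. that the favorable term $-\tau^2\chi\int_\Omega v_t^2$ together with the known $\|v\|_{L^2}$--bound simultaneously keeps $Y$ usable in the Gronwall step and controls $\tfrac{\lambda\tau\chi}{2}\|\nabla v\|_{L^2}^2$ --- and, in case (B3), to track the constants precisely enough that the hypothesis \eqref{sub-log-general} is exactly what is needed, the factor $\tfrac12$ there arising from the crude split $(a+b)^4\le8a^4+8b^4$ measured against the coefficient $4$ of $\int_\Omega|\nabla\sqrt u|^2$.
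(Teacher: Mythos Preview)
Your argument is correct, and from the point where you arrive at the inequality
\[
\frac{d}{dt}Y+4\int_\Omega|\nabla\sqrt u|^2+\tau^2\chi\int_\Omega v_t^2\le\chi\int_\Omega u^2+\int_\Omega(1+\ln u)f(u)
\]
onward it coincides with the paper's treatment: the sub-logistic estimate $(1+\ln u)f(u)\le-\hat\mu u^2+C$, the Gagliardo--Nirenberg step $\int u^2\le 8C_{GN}^4M\int|\nabla\sqrt u|^2+C$, and the matching of constants against \eqref{sub-log-general} are exactly what the paper does.

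The genuine difference lies in how you reach that display in the parabolic--parabolic case. The paper does \emph{not} substitute $\Delta v=\tau v_t+v-u$ into the entropy identity. Instead it tests the $v$--equation by $-\Delta v$, obtaining
\[
\frac{\tau}{2}\frac{d}{dt}\int_\Omega|\nabla v|^2+\int_\Omega|\nabla v|^2+\int_\Omega|\Delta v|^2=-\int_\Omega u\,\Delta v,
\]
adds $\chi$ times this to the entropy identity, and uses $-2\chi\int u\Delta v\le\chi\int|\Delta v|^2+\chi\int u^2$. This produces the same right-hand side as yours, but with the crucial bonus that the dissipation already contains $\chi\int_\Omega|\nabla v|^2$. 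Hence the Gronwall damping for $\frac{\tau\chi}{2}\int|\nabla v|^2$ is immediate, and the paper never needs the $L^2$--bound on $v$ nor any auxiliary control on $\int v_t^2$. Your route trades that simplicity for a favorable $-\tau^2\chi\int v_t^2$ term, which you then spend (together with the $\|v\|_{L^2}$ bound from Lemma~\ref{ul1-bdd}) to recover a bound on $\|\nabla v\|_{L^2}^2$ at the Gronwall stage. Both work; the paper's version is shorter and avoids the extra dependence on Lemma~\ref{ul1-bdd}, while yours is closer in spirit to the Lyapunov structure of the minimal Keller--Segel system.
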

\begin{proof}  We use integration by parts to compute honestly from \eqref{sub-log-chemo-pe} that
 \be \label{ulnu-diff}
 \begin{split}
\frac{d}{dt} \int_\Omega u\ln u+ 4\int_\Omega |\nabla u^\frac{1}{2} |^2&=\chi\int_\Omega \nabla u\nabla v+\int_\Omega (\ln u+1) f(u)\\
& =-\chi \int_\Omega   u\Delta  v+\int_\Omega (\ln u+1) f(u).
\end{split}
\ee
In the case of $\tau=0$, we substitute the express $\Delta v=v-u$ from the $v$-equation in \eqref{sub-log-chemo-pe} into \eqref{ulnu-diff} and utilize the nonnegativity of $u, v$ and $\chi$ to obtain
 \be \label{ulnu+gradv-diff}
  \frac{d}{dt} \int_\Omega u\ln u+ 4\int_\Omega |\nabla u^\frac{1}{2} |^2 \leq \int_\Omega   [\chi u^2+ (\ln u+1) f(u)].
\ee
In the case of $\tau>0$, we multiply the $v$-equation by $-\Delta v$ and then integrate by parts over $\Omega$ to get
\be\label{v-test}
\frac{\tau}{2}\frac{d}{dt}\int_\Omega |\nabla v|^2+\int_\Omega |\nabla v|^2+\int_\Omega |\Delta v|^2= -\int_\Omega   u\Delta  v.
\ee
We obtain through an obvious linear combination of identities \eqref{ulnu-diff} and \eqref{v-test} and a use of the Cauchy-Schawrz inequality that
\be\label{ulnu-gradv-com}
 \begin{split}
\frac{d}{dt} \int_\Omega \Bigr(u\ln u+\frac{\tau\chi}{2}|\nabla v|^2\Bigr)&+ 4\int_\Omega |\nabla u^\frac{1}{2} |^2+\chi\int_\Omega |\nabla v|^2+\chi\int_\Omega |\Delta v|^2\\
& =-2\chi \int_\Omega   u\Delta  v+\int_\Omega (\ln u+1) f(u)\\
&\leq  \chi\int_\Omega   |\Delta  v|^2+\int_\Omega \Bigr[\chi u^2+(\ln u+1) f(u)\Bigr].
\end{split}
\ee
Now, the key is to digest the last integral on its right-hand side of \eqref{ulnu+gradv-diff} and \eqref{ulnu-gradv-com}.

First, the definition of $\mu$ in  \eqref{log-sub-con}  allows us to deduce that
\be\label{mu<infty-equb3}
 \forall \epsilon \in (0, \mu),  \  \exists s_\epsilon \gg 1\text{  s.t.   }  f(s)\leq -(\mu-\epsilon)\frac{s^2}{\ln s}, \quad  \forall s\geq s_\epsilon.
\ee
Here and below, $\mu$ is understood as any finite number larger than $\chi$ in the case of $\mu=\infty$. Then it follows trivially from \eqref{mu<infty-equb3} that
$$
\chi s^2+(\ln s+1) f(s)\leq  \chi s^2-(\mu-\epsilon)\frac{(\ln s+1)}{\ln s} s^2\leq (\chi-\mu)^+s^2+\epsilon s^2, \ \ \forall s\geq s_\epsilon.
$$
Therefore, we have
\be\label{f-term-crol}
\begin{split}
&\int_\Omega \Bigr[\chi u^2+(\ln u+1) f(u)\Bigr]\\
&=\int_{\{u\leq s_\epsilon\}} \Bigr[\chi u^2+(\ln u+1) f(u)\Bigr]+\int_{\{u> s_\epsilon\}} \Bigr[\chi u^2+(\ln u+1) f(u)\Bigr]\\
&\leq \sup_{0<s<s_\epsilon}\Bigr[\chi s^2+(\ln s+1) f(s)\Bigr]|\Omega|+ (\chi-\mu)^+\int_\Omega u^2+\epsilon \int_\Omega u^2.
\end{split}
\ee
Now, we apply  the well-known two dimensional  Gaglarido-Nirenberg  inequality in Lemma \ref{GN-inter} with $n=2$ and the $L^1$-bound of $u$ in \eqref{ul1-est-sub} to estimate
\be\label{GN1-ulnu}
\begin{split}
\int_\Omega u^2=\|u^\frac{1}{2}\|_{L^4}^4&\leq C_{GN}^4 \Bigr(\|\nabla u^\frac{1}{2}\|_{L^2}^\frac{1}{2}\|u^\frac{1}{2}\|_{L^2}^\frac{1}{2}+\|u^\frac{1}{2}\|_{L^2}\Bigr)^4\\
&\leq 8C_{GN}^4\Bigr(M \|\nabla u^\frac{1}{2}\|_{L^2}^2+ M^2\Bigr),
\end{split}
\ee
where we applied the elementary inequality $(a+b)^4\leq 2^3(a^4+b^4)$ for all $ a, b\geq 0$.

Next, noticing that
\be\label{ulnu-by-u2}
u\ln u\leq \epsilon u^2+L_\epsilon, \quad L_\epsilon=\sup_{s>0}\{s\ln s-\epsilon s^2:\  s>0\}<\infty,
\ee
we conclude from \eqref{f-term-crol}, \eqref{GN1-ulnu} and \eqref{ulnu-by-u2} that
\be\label{f-term-crol-fin}
 \int_\Omega \Bigr[\chi u^2+(\ln u+1) f(u)+u\ln u\Bigr]\leq 8MC_{GN}^4\Bigr[(\chi-\mu)^++2\epsilon\Bigr]\int_\Omega |\nabla u^\frac{1}{2}|^2+N_\epsilon,
\ee
where
$$
N_\epsilon=\sup_{0<s<s_\epsilon}\Bigr[\chi s^2+(\ln s+1) f(s)\Bigr]|\Omega|+L_\epsilon|\Omega|+8M^2C_{GN}^4\Bigr[(\chi-\mu)^++2\epsilon\Bigr]<\infty.
$$
Now, thanks to \eqref{sub-log-general}, we can fix, for instance,  $\epsilon$ according to
\be\label{epsilon-0}
\begin{split}
\epsilon=\epsilon_0&= \frac{1}{2}\min\left\{\mu,  \frac{1}{2MC_{GN}^4}-(\chi-\mu)^+\right\}>0 \\
&\Longleftrightarrow 8MC_{GN}^4\Bigr[(\chi-\mu)^++2\epsilon_0\Bigr]\leq 4,
\end{split}
\ee
and then, in the case of $\tau=0$,  combing \eqref{f-term-crol-fin} with \eqref{ulnu+gradv-diff},  we readily  infer a differential inequality as follows:
$$
  \frac{d}{dt} \int_\Omega u\ln u + \int_\Omega u\ln u \leq N_{\epsilon_0};
$$
this easily entails that
$$
\int_\Omega   u\ln u \leq \Bigr(\int_\Omega   u_0\ln u_0\Bigr) +N_{\epsilon_0}.
$$
Hence,   the fact $-s\ln s\leq e^{-1}$ for all $s>0$ further entails
\be\label{ulnu-sub-bdd1}
\int_\Omega|u\ln u| =\int_\Omega u\ln u-2\int_{\{u\leq 1\}} u\ln u\leq \int_\Omega u_0\ln u_0 +(N_{\epsilon_0}+2e^{-1}|\Omega|).
\ee
In the case of $\tau>0$,  collecting \eqref{epsilon-0}, \eqref{f-term-crol-fin} and \eqref{ulnu-gradv-com}, we deduce another  key differential inequality:
$$
\frac{d}{dt} \int_\Omega \Bigr(u\ln u+\frac{\tau\chi}{2}|\nabla v|^2\Bigr)+\min\{1, \frac{2}{\tau}\}\int_\Omega \Bigr(u\ln u+\frac{\tau\chi}{2}|\nabla v|^2\Bigr)\leq N_{\epsilon_0},
$$
which yields simply
$$
\int_\Omega \Bigr(u\ln u+\frac{\tau\chi}{2}|\nabla v|^2\Bigr)\leq \int_\Omega \Bigr(u_0\ln u_0+\frac{\tau\chi}{2}|\nabla v_0|^2\Bigr)+\frac{N_{\epsilon_0}}{\min\{1,\frac{2}{\tau}\}}.
$$
This together with \eqref{ulnu-sub-bdd1} gives our desired estimate in \eqref{ulnu-gradvl2-est}.
\end{proof}
For the standard logistic source  $f(s)=as-bs^2$, based on the $L^1$-boundedness of $u\ln u$,  there are two common methods, cf \cite{OTYM02, TW12, Xiangpre},  to obtain $L^2$-boundedness $u$. To make our argument self-contained and for completeness, we here sketch these two methods for sub-logistic sources satisfying \eqref{log-sub-con}.

\begin{lemma} \label{pe-case} For $\tau=0$, there exists $C=C(u_0, |\Omega|, \chi,  f)>0$, and, for any $q\in (0,\infty)$, there exists $C_q=C(q, u_0, |\Omega|,\chi, f)>0$ such that
 \be\label{ul2-pe-case}
 \|u(\cdot, t)\|_{L^2}\leq C, \quad \quad \|\nabla v(\cdot, t)\|_{L^q}\leq C_q, \ \ \forall t\in(0, T_m).
 \ee
\end{lemma}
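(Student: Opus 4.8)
The plan is to reduce the lemma to a uniform $L^2$--bound for $u$ on $(0,T_m)$. Indeed, for $\tau=0$ the second equation of \eqref{sub-log-chemo-pe} is the elliptic problem $-\Delta v+v=u$ with homogeneous Neumann data, so $L^2$--elliptic regularity gives $\|v\|_{W^{2,2}}\le C\|u\|_{L^2}$; since $W^{2,2}(\Omega)\hookrightarrow W^{1,q}(\Omega)$ for every finite $q$ in dimension two (and $\|\cdot\|_{L^q}\le C\|\cdot\|_{L^2}$ for $q<1$ on the bounded domain $\Omega$), the bound $\|\nabla v(\cdot,t)\|_{L^q}\le C_q$ will drop out of $\|u(\cdot,t)\|_{L^2}\le C$. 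For the $L^2$--bound itself I would follow the energy/Gagliardo--Nirenberg method of \cite{OTYM02}; the semigroup-based route of \cite{TW12, Xiangpre}, which instead bootstraps $\|\nabla v\|_{L^q}$ and $\|u\|_{L^2}$ through the Duhamel formula and the $L^p$--$L^q$ smoothing of the Neumann heat semigroup, again starting from the $L^1\log L^1$--bound of Lemma \ref{ulnu-bdd}, reaches the same conclusion, and I would only indicate it.

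\emph{Step 1: a space--time $L^2$--bound for $u$.} First I would extract a little more than \eqref{ulnu-gradvl2-est} from the proof of Lemma \ref{ulnu-bdd}. Substituting the bound \eqref{f-term-crol} for $\int_\Omega[\chi u^2+(\ln u+1)f(u)]$ and then the Gagliardo--Nirenberg estimate \eqref{GN1-ulnu} for $\int_\Omega u^2$ into the entropy inequality \eqref{ulnu+gradv-diff} (case $\tau=0$), one obtains, for small $\epsilon>0$,
\[
\frac{d}{dt}\int_\Omega u\ln u+\Bigl(4-8C_{GN}^4\bigl[(\chi-\mu)^++\epsilon\bigr]M\Bigr)\int_\Omega\bigl|\nabla u^{1/2}\bigr|^2\le C .
\]
Because the smallness hypothesis \eqref{sub-log-general} reads $(\chi-\mu)^+M<\tfrac{1}{2C_{GN}^4}$, one can fix $\epsilon$ so that the coefficient of $\int_\Omega|\nabla u^{1/2}|^2$ equals some $\kappa>0$. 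Integrating over $(t,t+1)$ and controlling the boundary terms by $\|(u\ln u)(t)\|_{L^1}\le C$ (Lemma \ref{ulnu-bdd}) together with $-s\ln s\le e^{-1}$, one gets $\int_t^{t+1}\int_\Omega|\nabla u^{1/2}|^2\le C$, whence a second use of \eqref{GN1-ulnu} yields the space--time bound $\int_t^{t+1}\|u(s)\|_{L^2}^2\,ds\le C$.

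\emph{Step 2: upgrading to a uniform $L^2$--bound.} Then I would test the $u$--equation of \eqref{sub-log-chemo-pe} with $u$ and substitute $\Delta v=v-u$:
\[
\frac12\frac{d}{dt}\int_\Omega u^2+\int_\Omega|\nabla u|^2=\frac\chi2\int_\Omega u^3-\frac\chi2\int_\Omega u^2v+\int_\Omega uf(u)\le\frac\chi2\int_\Omega u^3+C,
\]
having discarded the nonpositive term $-\tfrac\chi2\int_\Omega u^2v$ (as $u,v\ge0$) and used that \eqref{log-sub-con} forces $sf(s)\to-\infty$, hence $\sup_{s>0}sf(s)<\infty$. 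The two-dimensional Gagliardo--Nirenberg inequality of Lemma \ref{GN-inter} gives $\|u\|_{L^3}^3\le C\bigl(\|\nabla u\|_{L^2}\|u\|_{L^2}^2+\|u\|_{L^2}^3\bigr)$, so Young's inequality converts the cubic term into $\tfrac12\int_\Omega|\nabla u|^2+C\|u\|_{L^2}^4+C\|u\|_{L^2}^3$; absorbing the gradient one arrives at
\[
\frac{d}{dt}\|u\|_{L^2}^2\le a(t)\,\|u\|_{L^2}^2+C,\qquad a(t):=C\bigl(\|u(t)\|_{L^2}^2+\|u(t)\|_{L^2}\bigr),
\]
and by Step 1 (and Cauchy--Schwarz) $\int_t^{t+1}a(s)\,ds\le C$. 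The local-in-time boundedness of $\|u\|_{L^2}$ together with this integral control then yields, via a standard Gronwall-type lemma (comparing $\|u(t)\|_{L^2}^2$ with its mean over $(t-1,t)$; in the degenerate range $T_m\le1$ one integrates directly from $t=0$), the bound $\|u(\cdot,t)\|_{L^2}\le C$ on $(0,T_m)$, and then $\|\nabla v(\cdot,t)\|_{L^q}\le C_q$ follows as explained at the outset.

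\emph{The main obstacle.} The only real difficulty is closing Step 2: testing with $u$ inevitably produces the cubic term $\tfrac\chi2\int_\Omega u^3$ from the chemotactic drift, whereas the sub-logistic damping returns only $\int_\Omega uf(u)\sim-\int_\Omega u^3/\ln u$, which is too weak to absorb it, and $\int_\Omega u^3$ cannot be controlled by the diffusion $\int_\Omega|\nabla u|^2$ alone (that would demand smallness of $M$ measured in a different constant). What rescues the argument is precisely the mechanism behind Lemma \ref{ulnu-bdd}: the logarithmic weight $\ln u+1$ in the entropy turns $f(u)\sim-u^2/\ln u$ into $\sim-u^2$, which under \eqref{sub-log-general} over-compensates the chemotactic $\chi u^2$, and the surplus dissipation is exactly the space--time $L^2$--bound of Step 1 feeding the time-localized Gronwall argument. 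The semigroup method avoids testing with $u$ but meets the analogous threshold inside the Duhamel estimate, resolved by the same $L^1\log L^1$ input.
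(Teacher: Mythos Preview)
Your argument is correct, but it departs from the paper's in one substantive respect: the treatment of the cubic term $\int_\Omega u^3$ after testing with $u$. The paper invokes the logarithmic Gagliardo--Nirenberg inequality of \cite[Lemma A.5]{TW14-JDE}, which, combined with the uniform bound on $\|u\ln u\|_{L^1}$ from Lemma \ref{ulnu-bdd}, gives
\[
\int_\Omega u^3\le\eta\int_\Omega|\nabla u|^2+C_\eta\qquad\text{for every }\eta>0,
\]
so that choosing $\eta<2/\chi$ absorbs the cubic directly into the dissipation and yields the clean ODI $\frac{d}{dt}\int_\Omega u^2+\int_\Omega u^2\le C$. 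You instead avoid this log-GN black box: in Step 1 you revisit the entropy computation and harvest the surplus dissipation $\int_t^{t+1}\|\nabla u^{1/2}\|_{L^2}^2\le C$, whence $\int_t^{t+1}\|u\|_{L^2}^2\le C$; in Step 2 the \emph{standard} GN inequality leaves a variable coefficient $a(t)\sim\|u(t)\|_{L^2}^2$ that is only $L^1$ on unit intervals, and you close via a uniform Gronwall argument. Your route is more self-contained (no external logarithmic interpolation lemma) at the cost of being two-staged; the paper's is a single step but leans on the cited inequality. Both reach the same $L^2$--bound on $u$, after which the $\nabla v$ estimate via $W^{2,2}$ elliptic regularity and the 2-D Sobolev embedding is identical in the two proofs.
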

\begin{proof}
By testing the $u$-equation by $u$ and using the $v$-equation with $\tau=0$ in the chemotaxis-growth model \eqref{sub-log-chemo-pe}, upon integration by parts, we find that
 \be \label{up-diff}
 \frac{d}{dt} \int_\Omega u^2+ 2\int_\Omega |\nabla u |^2\leq \chi\int_\Omega u^3+2\int_\Omega u f(u).
\ee
Thanks to the uniform  boundedness of $\|u\ln u\|_{L^1}$ by \eqref{ulnu-gradvl2-est} of Lemma \ref{ulnu-bdd} and the extended version of Gaglarido-Nirenberg inequality involving logarithmic functions  from  \cite[Lemma A.5]{TW14-JDE}, we can easily deduce  that
\be\label{ul3-bdd by}
\int_\Omega  u^3\leq \eta \int_\Omega |\nabla u |^2+C_\eta, \quad \quad \forall \eta>0.
\ee
On the other hand, it follows from \eqref{f-asy-conl1} that $\sup\{ 2sf(s)+s^2:\  s>0\}<\infty$.
Hence, by taking $\eta\in (0, \frac{2}{\chi})$ in \eqref{ul3-bdd by}, we deduce from \eqref{up-diff} an ODI for $\int_\Omega u^2$:
$$
\frac{d}{dt} \int_\Omega u^2+  \int_\Omega u^2\leq C(u_0, |\Omega|, \chi,  f),
$$
which quickly gives rise to the $L^2$-boundedness of $u$ as in \eqref{ul2-pe-case}.

 Then the $W^{2,p}$-elliptic estimate applied to $-\Delta v+v=u$ implies the boundedness of $\|v\|_{W^{2,2}}$, and then  the Sobolev embedding gives  the boundedness of $v$ as described  in the second part of \eqref{ul2-pe-case}.
\end{proof}
\begin{lemma} \label{pp-case} For  $\tau>0$, there exists $C=C(u_0, v_0, |\Omega|, \chi, \tau, f)>0$ such that
 \be\label{ul2-pp-case}
 \|u(\cdot, t)\|_{L^2}+\|\nabla v(\cdot,t)\|_{L^4}+\|\Delta  v(\cdot,t)\|_{L^2}\leq C, \ \ \forall t\in(0, T_m).
 \ee
\end{lemma}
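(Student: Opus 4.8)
The plan is to run a coupled $L^2$-estimate for $u$ and $\Delta v$ and to close it by a uniform-in-time Gr\"onwall argument fed by space--time bounds, since in the parabolic--parabolic case the chemotactic term can no longer be converted into the sub-critical quantity $\int_\Omega u^3$ as was done in Lemma \ref{pe-case}. As a preliminary, I would note that the proof of Lemma \ref{ulnu-bdd} in fact delivers more: because \eqref{sub-log-general} is a \emph{strict} inequality, one may choose $\epsilon$ strictly below the value $\epsilon_0$ of \eqref{epsilon-0} in the estimate \eqref{f-term-crol-fin}, so that a fraction $\theta\int_\Omega|\nabla u^{\frac12}|^2$ with $\theta>0$ survives on the left-hand side of the differential inequality for $\int_\Omega(u\ln u+\frac{\tau\chi}{2}|\nabla v|^2)$. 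Integrating over $(t,t+1)$ and using the uniform bound on $\|u\ln u\|_{L^1}$ from \eqref{ulnu-gradvl2-est} gives $\int_t^{t+1}\!\int_\Omega|\nabla u^{\frac12}|^2\le C$, whence by the Gagliardo--Nirenberg inequality \eqref{GN1-ulnu} and the $L^1$-bound \eqref{ul1-est-sub},
\[
\int_t^{t+1}\!\int_\Omega u^2\le C,\qquad\text{and, inserting this into }\eqref{v-test},\qquad \int_t^{t+1}\!\int_\Omega|\Delta v|^2\le C,
\]
the latter after absorbing $-\int_\Omega u\Delta v\le\tfrac12\int_\Omega|\Delta v|^2+\tfrac12\int_\Omega u^2$ and using the bound on $\|\nabla v\|_{L^2}$ from \eqref{ulnu-gradvl2-est}.

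Next I would test the $u$-equation by $u$. Since \eqref{f-asy-conl1} yields $\sup_{s>0}\{2sf(s)+s^2\}<\infty$, one has $\int_\Omega uf(u)\le-\tfrac12\int_\Omega u^2+C$, exactly as in Lemma \ref{pe-case}; the chemotactic term is the crux, and rather than rewriting $\Delta v$ I would estimate it by H\"older's inequality together with the two-dimensional Gagliardo--Nirenberg inequality $\|u\|_{L^4}^2\le C(\|\nabla u\|_{L^2}\|u\|_{L^2}+\|u\|_{L^2}^2)$ of Lemma \ref{GN-inter}, so that after Young's inequality
\[
-\frac{\chi}{2}\int_\Omega u^2\Delta v\ \le\ \frac{\chi}{2}\|u\|_{L^4}^2\|\Delta v\|_{L^2}\ \le\ \frac12\int_\Omega|\nabla u|^2+C\bigl(\|\Delta v\|_{L^2}^2+1\bigr)\int_\Omega u^2 .
\]
This produces the differential inequality $y'(t)+\tfrac12\,y(t)\le a(t)\,y(t)+C$ for $y(t)=\int_\Omega u^2$ with $a(t)=C(\|\Delta v(\cdot,t)\|_{L^2}^2+1)$. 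As $\int_t^{t+1}a\le C$ and $\int_t^{t+1}y\le C$ by the previous step, the standard uniform Gr\"onwall lemma (cf. \cite{BBTW15}) gives $\|u(\cdot,t)\|_{L^2}\le C$ on $(0,T_m)$; feeding this bound back and integrating the same inequality once more also yields $\int_t^{t+1}\int_\Omega|\nabla u|^2\le C$.

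Finally, to upgrade the regularity of $v$ I would test the $v$-equation by $\Delta^2 v$ — legitimate after the usual smoothing, the Neumann conditions forcing $\partial_\nu v_t=0$ and, through the equation together with $\partial_\nu u=0$, also $\partial_\nu\Delta v=0$ on $\partial\Omega$ — which gives
\[
\frac{\tau}{2}\frac{d}{dt}\int_\Omega|\Delta v|^2+\frac12\int_\Omega|\nabla\Delta v|^2+\int_\Omega|\Delta v|^2\ \le\ \frac12\int_\Omega|\nabla u|^2 .
\]
A further application of the uniform Gr\"onwall lemma, now with the space--time bounds on $\|\Delta v\|_{L^2}^2$ and $\|\nabla u\|_{L^2}^2$ just established, gives $\|\Delta v(\cdot,t)\|_{L^2}\le C$. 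Combined with $\|v\|_{L^2}\le C$ from Lemma \ref{ul1-bdd}, the elliptic estimate for $-\Delta v+v=u$ (as in Lemma \ref{pe-case}) yields $\|v\|_{W^{2,2}}\le C$, and since $W^{2,2}(\Omega)\hookrightarrow W^{1,4}(\Omega)$ in dimension two, $\|\nabla v\|_{L^4}\le C$; hence \eqref{ul2-pp-case} follows.

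The main obstacle is the chemotactic cross term $-\tfrac{\chi}{2}\int_\Omega u^2\Delta v$: unlike in the parabolic--elliptic case, it cannot be reduced to $\int_\Omega u^3$ and absorbed through the logarithmic Gagliardo--Nirenberg inequality, so one must instead spend a factor $\|\Delta v\|_{L^2}^2$, which is controlled only in a time-averaged sense. This is precisely why the argument has to be built around space--time estimates and the uniform Gr\"onwall lemma rather than a plain dissipative ODI, and why the preliminary sharpening of Lemma \ref{ulnu-bdd} — costless because \eqref{sub-log-general} is strict — is needed to make those space--time estimates available.
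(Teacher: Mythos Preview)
Your argument is correct but follows a genuinely different route from the paper. The paper couples \eqref{u-lp-int} directly with either the $\Delta^2 v$-identity \eqref{delav-diff} (Method I) or the $|\nabla v|^4$-identity \eqref{gradvl4} (Method II) to obtain a single autonomous ODI for $\int_\Omega(u^2+|\Delta v|^2)$, respectively $\int_\Omega(u^2+|\nabla v|^4)$; the chemotactic cross term is handled by Young's inequality at the cost of $\int_\Omega u^3$ and $\int_\Omega|\Delta v|^3$, which are then absorbed via the \emph{logarithmic} Gagliardo--Nirenberg inequality \eqref{ul3-bdd by} (from \cite{TW14-JDE}) and the plain GN estimate \eqref{delta vl3-bdd by}. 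No space--time bounds or uniform Gr\"onwall lemma are needed. Your approach, by contrast, first sharpens Lemma~\ref{ulnu-bdd} to harvest the dissipation $\int_t^{t+1}\!\int_\Omega|\nabla u^{1/2}|^2$, converts this into space--time control of $\int_\Omega u^2$ and $\int_\Omega|\Delta v|^2$, and then runs a non-autonomous Gr\"onwall argument with time-dependent coefficient $\|\Delta v\|_{L^2}^2$. What you gain is that you never invoke the logarithmic GN inequality of \cite{TW14-JDE}; what you pay is the extra machinery of uniform Gr\"onwall and the two-pass structure (first $u$, then $\Delta v$). Both close under the same hypothesis \eqref{sub-log-general}.

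One cosmetic slip: in your last paragraph you appeal to ``the elliptic estimate for $-\Delta v+v=u$'', but in the parabolic case $\tau>0$ this identity does not hold. What you actually need, and what your bounds already provide, is the Neumann $H^2$-regularity estimate $\|v\|_{W^{2,2}}\le C(\|\Delta v\|_{L^2}+\|v\|_{L^2})$, which does not use the equation at all; the Sobolev embedding $W^{2,2}(\Omega)\hookrightarrow W^{1,4}(\Omega)$ then finishes as you say.
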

\begin{proof} We multiply  the  $u$- equation in \eqref{sub-log-chemo-pe} by $u$  and integrate  by parts to infer  from H\"{o}lder's  inequality  that
\be \label{u-lp-int}
\begin{split}
\frac{1}{2}\frac{d}{dt} \int_\Omega u^2+ \int_\Omega |\nabla u |^2& =\chi  \int_\Omega u\nabla u \nabla v+\int_\Omega uf(u)\\
&\quad = - \frac{\chi}{2} \int_\Omega  u^2 \Delta  v +\int_\Omega uf(u).   \end{split}
\ee
Now, we have two choices to obtain the $L^2$-estimate of $u$ by coupling \eqref{u-lp-int} with two energy identities associated with the $v$-equation. The first choice is to multiply the $v$-equation by $\Delta^2v=\Delta(\Delta v)$ and then integrate over $\Omega$ by parts to obtain that
\be\label{delav-diff}
\frac{1}{2}\frac{d}{dt} \int_\Omega |\Delta v|^2+ \int_\Omega |\Delta v |^2+\int_\Omega |\nabla \Delta v |^2=-\int_\Omega \nabla u \nabla \Delta v.
\ee
The second choice is to take gradient of the $v$-equation  and then multiply it by $\nabla v|\nabla v|^2$ and, finally integrate by parts to see that
\be\label{gradvl4}\begin{split}
&\frac{1}{2}\frac{d}{dt} \int_\Omega |\nabla v|^4+\int_\Omega |\nabla |\nabla v|^2|^2+2\int_\Omega  |\nabla v|^{2}|D^2v|^2+2\int_\Omega |\nabla v|^{4}\\
&=\int_{\partial\Omega}  |\nabla v|^{2}\frac{\partial}{\partial \nu} |\nabla v|^2-\int_\Omega  u\Delta  v|\nabla v|^{2}-\int_\Omega  u\nabla  v\nabla |\nabla v|^{2}.  \end{split}
\ee
{\bf Method I:}  We combine \eqref{u-lp-int} with \eqref{delav-diff} to get, for any $\epsilon>0$, that
\be \label{u-lp-deltav-com1}
\begin{split}
\frac{1}{2}\frac{d}{dt} \int_\Omega \Bigr(u^2+ |\Delta v|^2\Bigr)&+\int_\Omega |\nabla u |^2+ \int_\Omega |\Delta v |^2+\int_\Omega |\nabla \Delta v|^2\\
& = \frac{\chi}{2} \int_\Omega  u^2 \Delta  v -\int_\Omega \nabla u \nabla \Delta v+\int_\Omega uf(u)\\
&\leq \epsilon \int_\Omega  |\Delta  v|^3+\frac{\chi^\frac{3}{2}}{3\sqrt{6\epsilon}}\int_\Omega u^3 +\frac{1}{2}\int_\Omega |\nabla u|^2\\
&\ \ \ +\frac{1}{2}\int_\Omega |\nabla \Delta v|^2++\int_\Omega uf(u),  \end{split}
\ee
where we have applied the Young's inequality with epsilon \eqref{Young}.

 Applying the Gaglarido-Nirenberg interpolation inequality  with $n=2$, Sobolev interpolation inequality and  the boundedness of $\| v\|_{H^1}$ implied by \eqref{vl2-est-sub} and  \eqref{ulnu-gradvl2-est}, we derive (see details, for instance, in  \cite{OTYM02, Xiangpre}) that
 \be\label{delta vl3-bdd by}
\int_\Omega  |\Delta v|^3\leq C\int_\Omega |\nabla\Delta v|^2+C.
\ee
Now, since $sup\{sf(s):  s>0\}<\infty$ clearly implied by \eqref{log-sub-con}, based on \eqref{delta vl3-bdd by}, \eqref{ul3-bdd by} and \eqref{u-lp-deltav-com1}, one can easily deduce a Gronwall inequality of the form:
$$
\frac{d}{dt} \int_\Omega \Bigr(u^2+ |\Delta v|^2\Bigr)+ \int_\Omega \Bigr(u^2+ |\Delta v|^2\Bigr)\leq C(u_0, v_0, |\Omega|, \chi, \tau, f),
$$
yielding directly the uniform boundedness of $\|u\|_{L^2}+\|\Delta v\|_{L^2}$ as stated in \eqref{ul2-pp-case}.

{\bf Method II:}  We combine \eqref{u-lp-int} with \eqref{gradvl4} to derive that
\be \label{u-lp-deltav-com}
\begin{split}
\frac{1}{2}\frac{d}{dt} \int_\Omega \Bigr(u^2+ |\nabla v|^4\Bigr)&+\int_\Omega |\nabla u |^2+\int_\Omega |\nabla |\nabla v |^2|^2\\
&\ \ + 2\int_\Omega |\nabla v |^2|D^2v|^2+2\int_\Omega |\nabla v|^4\\
& = \chi \int_\Omega u\nabla  u \nabla   v -\int_\Omega  u\Delta  v|\nabla v|^{2} -\int_\Omega  u\nabla  v\nabla |\nabla v|^{2}\\
&\quad \quad +\int_{\partial\Omega}  |\nabla v|^{2}\frac{\partial}{\partial \nu} |\nabla v|^2+\int_\Omega uf(u).  \end{split}
\ee
Next, we shall estimate the integrals on the right in terms of the dissipation terms on its left using the very common ideas:
\be\label{com-est-1}
\begin{split}
&\chi \int_\Omega u\nabla  u \nabla   v -\int_\Omega  u\Delta  v|\nabla v|^{2}-\int_\Omega  u\nabla  v\nabla |\nabla v|^{2}\\
 &\leq \frac{1}{2} \int_\Omega |\nabla  u|^2+\frac{\chi^2}{2}\int_\Omega u^2| \nabla   v|^2+\int_\Omega   |\Delta v|^2|\nabla v|^2+\frac{1}{4}\int_\Omega u^2| \nabla   v|^2\\
 &\quad + \frac{1}{2}\int_\Omega |\nabla |\nabla v|^2|^2+\frac{1}{2}\int_\Omega u^2| \nabla   v|^2\\
 &\leq \frac{1}{2} \int_\Omega |\nabla  u|^2+\frac{1}{2}\int_\Omega |\nabla |\nabla v|^2|^2+2\int_\Omega   |D^2 v|^2|\nabla v|^2+(1+\chi^2)\int_\Omega u^2| \nabla   v|^2
 \end{split}
 \ee
 and the Young's inequality with epsilon \eqref{Young}  shows
\be\label{com-est-2}
\int_\Omega u^2| \nabla   v|^2\leq \epsilon \int_\Omega  |\nabla   v|^6+\frac{2}{3\sqrt{3\epsilon}}\int_\Omega u^3,\quad \quad \forall \epsilon>0.
\ee
In view of the  boundedness of $\|\nabla v\|_{L^2}$ by \eqref{ulnu-gradvl2-est}, the 2-D GN inequality entails
\be\label{com-est-3}
\begin{split}
 \int_\Omega  |\nabla   v|^6=\||\nabla v|^2\|_{L^3}^3&\leq\Bigr( C_{GN}(|\nabla |\nabla v|^2\|_{L^2}^\frac{2}{3}\||\nabla v|^2\|_{L^1}^\frac{1}{3}+\||\nabla v|^2\|_{L^1})\Bigr)^3\\
 &\leq C|\nabla |\nabla v|^2\|_{L^2}^2+C.
 \end{split}
\ee
As for the boundary integral in \eqref{u-lp-deltav-com}, there are a couple of known ways to bound it in terms of  the boundedness of $\|\nabla v\|_{L^2}$, cf.  \cite{ISY14,TWZAMP, Xiangpre2};  the final outcome is
\be\label{com-est-4}
\begin{split}
\int_{\partial\Omega}  |\nabla v|^{2}\frac{\partial}{\partial \nu} |\nabla v|^2&\leq \epsilon \int_\Omega |\nabla |\nabla v|^2|^2+C_\epsilon \Bigr(\int_{\Omega} |\nabla v|^2\Bigr)^2\\
&\leq \epsilon \int_\Omega |\nabla |\nabla v|^2|^2+C_\epsilon,\quad \quad  \forall \epsilon>0.
\end{split}
\ee
Inserting the estimates \eqref{com-est-1}, \eqref{com-est-2}, \eqref{com-est-3}, \eqref{com-est-4} and \eqref{ul3-bdd by} into \eqref{u-lp-deltav-com} and then choosing sufficiently small $\epsilon>0$, as before, we obtain an ODI as follow:
$$
\frac{d}{dt} \int_\Omega \Bigr(u^2+ |\nabla v|^4\Bigr)+ \int_\Omega \Bigr(u^2+ |\nabla v|^4\Bigr)\leq C(u_0, v_0, |\Omega|, \chi, \tau, f),
$$
which directly establishes the uniform boundedness of $\|u\|_{L^2}+\|\nabla v\|_{L^4}$.
\end{proof}
\begin{proof}[Proof of Theorem \ref{main-bdd-thm}] In light of  the uniform $L^2$-boundedness of $u$ provided by Lemmas \ref{pe-case} and \ref{pp-case}, the   quite known $L^{\frac{n}{2}+}$-boundedness criterion with $n=2$ in \cite{BBTW15, Xiangpre} obtained via Moser type iteration technique shows  $T_m=\infty$  and the uniform  boundedness as stated in \eqref{bdd-linfty}. Notice that $(u(\cdot,\sigma), v(\cdot,\sigma)\in (C^2(\overline{\Omega}))^2$ for any $\sigma>0$. Whence,   performing a small time shift and treat $t=\sigma$  as the new "initial time" and replacing $(u_0,v_0)$ by $(u(\cdot,\sigma), v(\cdot,\sigma))$, applying the same $L^{\frac{n}{2}+}$-boundedness criterion with $n=2$, we conclude the uniform boundedness as stated in  \eqref{bdd-linfty1}.
\end{proof}


\begin{thebibliography}{99}
 \footnotesize


\bibitem{BBTW15} N. Bellomo, A. Bellouquid, Y. Tao, M. Winkler Toward a mathematical theory of Keller-Segel models of pattern formation in biological tissues, Math. Models Methods Appl. Sci. 25 (2015), 1663--1763.



 \bibitem{Cao15} X. Cao, Global bounded solutions of the higher-dimensional Keller-Segel system under smallness conditions in optimal spaces, Discrete Contin. Dyn. Syst.  35  (2015),   1891--1904.




 \bibitem{FLP07} E. Feireisl, P. Laurencot and H. Petzeltova, On convergence to equilibria for the Keller-Segel chemotaxis model, J. Differential Equations, 236 (2007), 551--569.


 \bibitem{Fried} A. Friedman,  Partial differential equations. Holt, Rinehart and Winston, New York-Montreal, Que.-London, 1969.


 \bibitem{FW14} K. Fujie, M.  Winkler and T.  Yokota,  Blow-up prevention by logistic sources in a parabolic-elliptic Keller-Segel system with singular sensitivity,  Nonlinear Anal.  109  (2014), 56--71.


\bibitem{HZ16} X. He and S. Zheng,  Convergence rate estimates of solutions in a higher dimensional chemotaxis system with logistic source,  J. Math. Anal. Appl.  436  (2016),    970--982.

  \bibitem{HP04}   T.  Hillen and A. Potapov, The one-dimensional chemotaxis model: global existence and asymptotic profile,  Math. Methods Appl. Sci. 27 (2004),  1783--1801.


  \bibitem{Hi} T. Hillen and K. Painter, A user's  guide to PDE models for chemotaxis,  J. Math. Biol., 58 (2009), 183--217.

  \bibitem{HP11} T. Hillen and  K. Painter, Spatio-temporal chaos in a chemotaxis model, Phys. D 240 (2011),  363--375.
  \bibitem{HW01} D. Horstmann and G.  Wang, Blow-up in a chemotaxis model without symmetry assumptions, European J. Appl. Math. 12 (2001), 159--177.

  \bibitem{Ho1} D. Horstmann, From 1970 until now: the Keller-Segal model in chaemotaxis and its consequence I,  Jahresber   DMV, 105 (2003), 103--165.

   \bibitem{HW05} D. Horstmann and M.  Winkler,  Boundedness vs. blow-up in a chemotaxis system, J. Differential Equations 215 (2005),  52--107.



  \bibitem{HT16} B. Hu and Y. Tao,  Boundedness in a parabolic-elliptic chemotaxis-growth system under a critical parameter condition, Appl. Math. Lett. 64 (2017), 1--7.

\bibitem{ISY14} S. Ishida, K. Seki and T.  Yokota, Boundedness in quasilinear Keller-Segel systems of parabolic-parabolic type on non-convex bounded domains,  J. Differential Equations  256  (2014),  no. 8, 2993--3010.

\bibitem{Ke} E. Keller and L. Segel, Initiation of slime mold aggregation viewed as an instability, J. Theoret Biol.,  26  (1970), 399--415.

\bibitem{Ke1} E. Keller and L. Segel,  Model for chemotaxis,  J. Theor. Biol.,  30 (1971), 225--234.



\bibitem{KS16} K. Kang and A.  Stevens,  Blowup and global solutions in a chemotaxis-growth system,
Nonlinear Anal. 135 (2016), 57--72.


\bibitem{La15-JDE}J. Lankeit, Eventual smoothness and asymptotics in a three-dimensional chemotaxis system with logistic source,  J. Differential Equations  258  (2015), 1158--1191.

\bibitem{La15-DCDS} J. Lankeit, Chemotaxis can prevent thresholds on population density,  Discrete Contin. Dyn. Syst. Ser. B  20  (2015),  1499--1527.



  \bibitem{Na01} T. Nagai, Blowup of nonradial solutions to parabolic-elliptic systems modeling chemotaxis in two-dimensional domains, J. Inequal. Appl. 6 (2001), 37--55.

 \bibitem{NSY97} T. Nagai,  T. Senba and K. Yoshida,
Application of the Trudinger-Moser inequality to a parabolic system of chemotaxis,
Funkcial. Ekvac.  40  (1997),  411--433.





\bibitem{Nirenberg66} L. Nirenberg,   An extended interpolation inequality,
Ann. Scuola Norm. Sup. Pisa. (3) 20 1966,  733-737.


\bibitem{OTYM02} K. Osaki,T.  Tsujikawa, A. Yagi, M.  Mimura, Exponential attractor for a chemotaxis-growth system of equations, Nonlinear Anal. 51, 119-144 (2002).

 \bibitem{OY01}  K.  Osaki and A. Yagi, Finite dimensional attractor for one-dimensional Keller-Segel equations, Funkcial. Ekvac. 44 (2001), 441--469.


 \bibitem{SS01} T.  Senba and T. Suzuki, Parabolic system of chemotaxis: blowup in a finite and the infinite time, Methods Appl. Anal. 8 (2001), 349--367.


 \bibitem{TW12} Y. Tao and M. Winkler, Boundedness in a quasilinear parabolic-parabolic Keller-Segel system with subcritical sensitivity,  J. Differential Equations 252  (2012),  692--715.

 \bibitem{TW14-JDE} Y.  Tao and M. Winkler, Energy-type estimates and global solvability in a two-dimensional chemotaxis-haptotaxis model with remodeling of non-diffusible attractant, J. Differential Equations 257 (2014), 784--815.

 \bibitem{TWZAMP} Y.  Tao and M. Winkler,  Boundedness and decay enforced by quadratic degradation in a three-dimensioanl chemotaxis-fluid system, Z. Angew. Math. Phys.  66  (2015),  2555--2573.

 \bibitem{TW15-JDE}Y.  Tao and M. Winkler, Persistence of mass in a chemotaxis system with logistic source,   J. Differential Equations  259  (2015),  6142--6161.


 \bibitem{TW07} J. Tello and M.  Winkler,  A chemotaxis system with logistic source, Comm. Partial Differential Equations 32 (2007),  849--877.


 \bibitem{WXpre} Z. Wang and T. Xiang,  A  class of chemotaxis systems with   growth  source  and nonlinear secretion, arXiv:1510.07204, 2015.

\bibitem{Win10-JDE} M. Winkler,  Aggregation vs. global diffusive behavior in the higher-dimensional Keller-Segel model, J. Differential Equations, 248 (2010), 2889--2905.


\bibitem{Win10}M. Winkler,  Boundedness in the higher-dimensional parabolic-parabolic chemotaxis system with logistic source,  Comm. Partial Differential Equations 35  (2010), 1516--1537.


\bibitem{Win11} M. Winkler, Blow-up in a higher-dimensional chemotaxis system despite logistic growth restriction, J. Math. Anal. Appl. 384  (2011), 261--272.

\bibitem{Win13} M. Winkler, Finite-time blow-up in the higher-dimensional parabolic-parabolic Keller-Segel system, J. Math. Pures Appl. 100  (2013), 748--767.

\bibitem{Win14} M. Winkler,  Global asymptotic stability of constant equilibria in a fully parabolic chemotaxis system with strong logistic dampening, J. Differential Equations  257  (2014), 1056--1077.

\bibitem{Win14-JNS} M. Winkler,  How far can chemotactic cross-diffusion enforce exceeding carrying capacities? J. Nonlinear Sci.  24  (2014),  809--855.



\bibitem{Win17-DCDSB}M. Winkler, Emergence of large population densities despite logistic growth restrictions in fully parabolic chemotaxis systems, Discrete Contin. Dyn. Syst. Ser. B 22 (2017), 2777--279.


\bibitem{Xiang14}   T. Xiang, On effects of sampling radius for the nonlocal Patlak-Keller-Segel chemotaxis model,  Discrete Contin. Dyn. Syst. 34 (2014), 4911-4946.



\bibitem{Xiangpre}T. Xiang, Boundedness and global existence in the higher-dimensional parabolic–parabolic chemotaxis system with/without growth source,
J. Differential Equations 258 (2015),  4275--4323.


\bibitem{Xiangpre2} T.Xiang, How strong  a logistic damping can  prevent blow-up for the minimal  Keller-Segel chemotaxis system?  J. Math. Anal. Appl. 459 (2018),  1172--1200.

  \bibitem{YCJZ15}C. Yang, X.  Cao, Z. Jiang and S. Zheng,  Boundedness in a quasilinear fully parabolic Keller-Segel system of higher dimension with logistic source, J. Math. Anal. Appl.  430  (2015),  585--591.


 \bibitem{ZZ17-ZAMP} X. Zhao and S.  Zheng,   Global boundedness to a chemotaxis system with singular sensitivity and logistic source,  Z. Angew. Math. Phys. 68 (2017), no. 1, Art. 2, 13 pp. .

\end{thebibliography}
\end{document}